\newtheorem{theorem}{Theorem}[section]
\newtheorem{lemma}[theorem]{Lemma}
\newtheorem{corollary}[theorem]{Corollary}
\theoremstyle{definition}
\theoremstyle{remark}
\numberwithin{equation}{section}
\begin{document}
\large

\title{ New perspectives: stability and  complex dynamics  of food webs via
Hamiltonian methods}



\author{Vladimir Kozlov}
\address{Dept.of Mathematics, University of Linkoping, 58183, Linkoping, Sweden}

\author{Sergey Vakulenko}
\address{Institute for Mechanical Engineering Problems, Russian Academy of Sciences  Saint Petersburg, Russia
and  \\ Saint Petersburg National Research University of Information Technologies, Mechanics and Optics.
Saint Petersburg, Russia
}

\author{Uno Wennergren}
\address{Dept.of Ecology, University of Linkoping, 58183, Linkoping, Sweden}

\begin{abstract}{ We investigate  global stability and dynamics of large   ecological networks
by  classical methods of the dynamical system theory, including  Hamiltonian  methods, and averaging.  Our analysis
exploits the network
topological structure, namely, existence of strongly connected nodes (hubs) in the networks. We reveal new relations between topology,
interaction structure  and network dynamics. We describe mechanisms of
 catastrophic  phenomena  leading to  sharp changes of dynamics and
  investigate how these phenomena
 depend on ecological interaction structure. We show that
a Hamiltonian structure of interaction leads to stability and large biodiversity.
   }
\end{abstract}

\maketitle

\subjclass[2010]{14T05, 92C42}

\keywords{ecological networks, Hamiltonian dynamics, averaging, solitons}

\date{25/08/2012}


\section{Introduction}\label{sec1}

In this paper, we consider a global dynamics of large ecological
networks with complex topology.
The last decade, the topological
structure  of  biological networks and, in particular, ecological
networks (food webs) has been received a great attention (see
\cite{AB, Bas1, Bas2, Bas3,  Dunne2,
Krause}). Different  indices
were introduced and studied in detail for real models and random
assembled networks (see for an overview \cite{Dorman}). These
indices are connectance, cluster coefficients, degree distribution,
number of compartments, and many others \cite{Dorman}. They reflect
important topological properties of networks, for example, the
degree distribution show that the ecological networks contain a few
number of strongly connected nodes, while the compartment number
describes
 their decomposition in compartments \cite{Krause}.  The networks can contain  different substructures (for example, when a guild of species
 contains  specialists with few links and generalists with many links).
 Many works have investigated a connection between the network structure and  fragility (see, for example, \cite{Montoya,
Pimm}).  Great efforts has been done  to reveal connections between the
network topological structure  and their robustness and fragility
\cite{Alles,  Bas1, Bas3,  Dorman}. In
particular, it is found that
 mutualistic networks show interesting effects, for example, nestedness \cite{Bas1,  Bas3}
 and a truncation in degree distribution \cite{Bas3}.

One of key problems is to find a qualitative description of dynamics generated by a  network of  a given topological structure. This problem
is connected with the local and global stability of networks.  Until May's seminal works \cite{May1, May2}, ecologists believed
 that huge complex ecosystems,  involving
 a larger number of species and connections,
are more stable \cite{Hut}.  May \cite{May1, May2} considered
 a community of $S$ species with connectance
$C$ that measures the number of realized links with respect to  the
number of possible links. By investigation of the matrix  defining
stability of a local equilibrium, it was shown that the instability
can increase with respect to $C$. More connected communities would
be more unstable. These ideas  were developed in \cite{Alles}, where
networks consisting of predator-prey modules  were studied. It is
shown that if predator-prey interactions are prevalent, then the
complex community is stable. This local approach was developed in
\cite{Alles1, Alles2}, where more complicated networks with
interactions of different types (predator-prey, amensalism,
mutualism, competition) were studied. The technique, used in   works
\cite{May1,May2, Alles, Alles1, Alles2}, allows us to  study only
local stability of equilibria. Important results on global stability
are obtained in \cite{Hofbauer}, where the Lyapunov function method
 is applied (see Theorem 15.3.1).  In this case all trajectories tend
to a unique equilibrium.

However, real ecological systems exhibit both a  complicated behaviour
with complex transient dynamics and stability. We observe here
a chaos, periodical oscillations, bursts and transient phenomena,
connected with multistability.
 But up to now, there is no
analytic approach to describe ecological dynamics including such effects
as  chaos, multistability,   ecological
bursts  and global stability of ecological networks supporting
different interactions and having complicated topology.

	We consider  a  class  of the  Lotka-Volterra
systems consisting of two groups of species with interactions between these groups only. This is a natural generalization of the classical prey-predator model and under some natural
assumptions such systems are reduced to Hamiltonian ones (for brevity, we refer to them as
HLV systems).
Note that some Lotka-Volterra systems admit
a  special Hamiltonian representation involving a skew-product Poisson matrix depending on species-abundances \cite{Plank} that allows to find a constant of motion.

In this paper, we first introduce  Hamiltonian representations of the HLV  based on   canonical variables. Such canonical structure
 simplifies an analysis of dynamics and sharply facilitates the
use of classical perturbation methods.
Moreover, we can obtain a very  short description of complex systems. Consider,
for example,
so-called "star-system", when one of the groups consists of one species (a predator species feeding on a number of  prey-species), presents  examples of HLV systems.
 Such star structures often appear in ecological webs.
Neglecting species concurrence and self-limitation effects, we obtain that the star subsystem can be described by   simple Hamiltonians. In our canonical variables,
these ecological Hamiltonians correspond to  nonlinear oscillators, which can be described by two canonical variables only and  well
studied in physics and mechanics. We show, by this analogy, that this dynamics is integrable
and exhibits interesting phenomena, for example, kink and soliton solutions.
We find a dependence of solution types on ecological interactions involved in the system.
An interesting effect, which does not exist in mechanics, but arises in ecology,
is a domino effect. Namely,
in star systems, it is possible  that extinction of a corner-stone species leads to extinction of all species. We find the conditions on interactions, which makes it possible.
Note that, from a dynamical point of view, solitons correspond to  so-called homoclinic
curves \cite{Robinson}.  This implies an important consequence. Namely,
if we consider a weakly perturbed star-system, this homoclinic structure can generate
a chaos, an effect well studied in many works for  mechanical and physical applications.
Another interesting effect is that in ecology, in contrast with
mechanics and physics, the Hamiltonians of star systems involve some positive
constants, which are defined by initial data. This means that the star system dynamics has a ''memory".  The solution form and period  depend on these constants.

The case of large random systems is particularly interesting. The papers
  \cite{May1, May2} are focused on the stability of local
	equilibria in such systems, in works \cite{Alles1, Alles2} the structure of ecological
	interactions is taken into account. In  \cite{Williams} a niche model is introduced
	and it was shown, by computer simulations, that this remarkably simple model
	predicts key structural properties of the complex food webs. Work \cite{Gross}
	investigated the stability of large random networks assembled by the niche model.
	
	By our sufficient and necessary conditions of persistence, we consider
	global stability of random  HLV systems with interactions of predator-prey type.
		Note that HLV structure is consistent with the niche model.
	There occur two systems of linear equations
	which must have positive solutions. Unfortunately, the probability that both systems
		have such solutions  tends to zero as the size system $N \to ´+\infty$.
		However, one can suppose that predator-species and prey-species can use
		an adaptive strategy. Mathematically, it can be described if we assume that one can vary some coefficients, which define ecological interactions. It can be interpreted as an {\em adaptive strategy}.  Moreover, we assume that networks involves a fixed number of
		species-generalists. This  allows us to show that
		 the probability
that  these conditions are fulfilled tends to $1$ as $N \to ´+\infty$.
This analytical result confirms conclusions \cite{Gross}, where food webs stability is  studied  for more general random models.    Numerical results of  \cite{Gross}
show that food web stability is enhanced when species at a high trophic level feed
on multiple prey species or species at an intermediate trophic level are fed upon multiple
predator species. This means that species-generalists increase stability.
Note that they play a key role in our approach since potential energy
in Hamiltonian is defined via species-generalist abundances.

We examine the notions of stability  and  structural stability
  for large ecological systems. We demonstrate that  some restrictions should be imposed
on the signs of coefficients and on the structure of the interaction matrix
in order to have a stable system.
For  HLV systems we prove the following  results on global stability.
First, we state sufficient and necessary conditions for strong persistence, i.e., ecological
stability. We demonstrate  that,
under some natural assumptions, the  HLV systems with concurrence and self-limitations
 are permanent even if both these effects are small. In this case
the corresponding ecological systems have a large  biodiversity, and they are globally stable nonetheless they can exhibit a complicated transient dynamics.
Such   systems may be permanent and structurally unstable. In a permanent  system we can observe complicated transient dynamics.


In order to study these perturbation effects in more detail, we use an
asymptotic approach, which allows us to investigate  dynamics of
weakly perturbed HLV systems.
This approach is based on  methods,  developed in the Hamiltonian system
theory and mechanics \cite{Arnold,  Robinson, Kam, Sari}.   The canonical structure, that are found  by us for HLV, simplifies
	the application of these methods.



We consider different types of perturbations.
They can be generated by a varying environment, for example, by climate changes.
Another type of perturbations is due to a topological structure.      The key point, that
allows us to find a simplified description of the dynamics and stability, is existence of
species-generalists and a random scale-free topology of the
networks. As an example, let us consider a predator species feeding on a number
of species prey.  They form a star subsystem in the food web. If we
consider this star subsystem as a separate unit (niche), we can study
dynamics of this subsystem in the case of
 a weak self-limitation.  The important
observation is that in random scale-free topology different star subsystems ( niches) are weakly overlapped, therefore, such a system can be viewed as
a union of almost independent  niches.  Therefore,  the corresponding
dynamics can be described by averaging methods \cite{Arnold,  Sari}. We conclude that
there are possible different interesting effects such as existence of bursts, chaos and quasiperiodic solutions, and resonances. These effects appear if different niches
interact (overlap),  and even a weak interaction can lead to chaos or resonances.
Resonances can provoke an instability. The weak resonance effect can be repressed by
a self-limitation.

The advantage of the proposed approach is clarified when we
analyze an environment influence or study transient dynamics.
in the
Hamiltonian case it is sufficient to investigate a
''potential energy". The form  of this energy
 depend on the network topology and
interconnection forces. An analysis of the energy form allows us to find possible kinds of
transient dynamics in the system, i.e., what can happen in this
network: oscillations,  bursts, or sharp transitions, and also to
check  the stability. This analysis is particularly simple
for star systems.

In the case of slow environment variations, we derive an equation, which
describe a slow evolution of the energy. This equation allows
us to analyze possible effects. We show that the energy evolution can lead
to a sharp change of solution form, for example, there are possible
transitions from periodical solutions to  solitons, and vice versa.

We show that in weakly perturbed
HLV systems resonances are possible. Note that work \cite{King} considered
the case of two species  predator-prey systems perturbed  by small time periodic climate
variations.  We consider internal resonances, when
the resonance effect arises as a result of interactions inside the networks, without
external influences.
The mechanism of such  resonance effect
is as follows.  Let us consider a network consisting  of many disjoint
star subunits without self-limitation and concurrence. Such system lies in the HLV class.
Really existing networks contain star subsystems, which can interact (for example,
two different predator-species share some prey-species). Assume that this
interaction is weak. Then, to describe the network dynamics,
 we can apply standard perturbation methods.
We show that, under some assumptions on the system parameter,
 this interaction may generate a resonance effect, which
leads to an exponential instability of equilibrium. Such catastrophic instability is possible   if prey-predator interactions are mixed with other ones, which perturb prey-predator system. Note that the case of resonances induced by a periodic
oscillations of system parameter for two species is considered in  \cite{King}.

One of key questions of ecology is a connection between complexity
and stability \cite{May1}.  We consider networks
of random structure (with random ecological interactions). A choice
of random interactions in these networks follows works \cite{May1,
Alles, Alles1}. Numerical simulations and some analytical
considerations show that scale-free networks of large size can be
stable if a  parameter analogous to the May one \cite{May1} is less
than  a critical value.  We reveal a mechanism of  destruction  of
large networks in the case when the May parameter is large enough.
This mechanism is connected with a network topology and formation of
hubs of large connectivities that can lead to an "overlapping" phenomenon
 (see sections \ref{topol}  and \ref{random}).

The most of results are analytic  but we also use numerical
simulations to illustrate analytical results and to study networks
of random structure.


\section{Organization of the paper}\label{sec2}

In Sect.\ref{sec3}, we formulate  the Lotka-Volterra model with a
self-limitation describing interaction of two species communities (for
example, plants and pollinators, or preys and predators).

The investigation of  ecological stability  is based
on a reduction of the problem to a Hamiltonian system, which is presented
 in next sections
\ref{trans} and \ref{Ham}.  Namely,
 we transform equations to another system
and obtain, under some natural assumptions, a Hamiltonian formulation
of these equations.  Furthermore, in Sect. \ref{globs}
we formulate and prove main results on global stability of HLV systems.
In Sect. \ref{pred} star systems are considered.
   Sect. \ref{pred2}  concerns with  the case of varying environment, which is modeled by an ecological system consisting of one generalist species and several specialist species  with
   coefficients depending on time. We include in the model a weak self-limitation also.

A resonance  analysis for ecological system consisting of two generalist species and several specialist species is performed in Sect. \ref{Reson}.

We deal with the stability-complexity problem for large random
networks in  section \ref{random}.




\section{Statement of problem}\label{sec3}

\subsection{Topology of networks} \label{topol}

We consider the following  Lotka-Volterra system describing an interaction between two groups of species
$x$ and $v$:
\begin{equation}
     \frac{dx_i}{dt}=x_i (-r_i  + \sum_{k=1}^M a_{ik} v_k -  \ \sum_{j=1}^N \gamma_{ij} x_j),
    \label{LVX}
     \end{equation}
\begin{equation}
     \frac{dv_j}{dt}=v_j (\bar r_j   -  \sum_{l=1}^N b_{jl} x_l  - \sum_{k=1}^M d_{jk} v_k),
    \label{LVV}
     \end{equation}
     where $i=1,..., N$, $j=1,..., M$ and
      $N+M$ is the total number of species with abundances
     $x_i$ and $v_j$.
The coefficients $r_i$  and $\bar r_j$ are  intrinsic growth (or
decay) rates for species $x_i$ and $v_j$, respectively. The matrices
${\bf A}$ and $\bf B$ with the entries $a_{ij}$ and $b_{ij}$,
respectively,  determine an interaction between two groups of
species, whereas the matrices $\bf \Gamma$ with entries
$\gamma_{ij}$ and $\bf D$ with entries $d_{ij}$ correspond to
self-limitation effects. This model describes an ecological  system
with two trophic levels.

The topological structure of the networks consists of a
directed graph $(V,  E)$, where $V$ is a set of vertices and $E$ is
a set of edges (links).  We distinguish two types of nodes,
 $V_1=\{1,2,..., N\}$ and  $V_2=
 \{1,2,..., M\}$.  Thus, $V=V_1 \cup V_2$.

 The edge $e=\{i, j\}$ belongs to $E$ if one of the alternatives is fulfilled:  a) $a_{ij} \ne 0$ when $i\in V_1$ and $j\in V_2$; b) $\gamma_{ij} \ne 0$ when $i\in V_1$ and $j\in V_1$;
 c) $b_{ij} \ne 0$ when $i\in V_2$ and $j\in V_2$; d) $d_{ij} \ne 0$ when $i\in V_2$ and $j\in V_2$.


 Connectance ${\mathbb C}$ is an important characteristic of the network and it
   is defined as the number of the ecological links divided by the number of all possible links:
 \begin{equation} \label{Conn}
 {\mathbb C}=2|E|/(N+M)(N+ M -1),
 \end{equation}
where $|E|$ is the number of edges.

In the scale-free networks  the degree distribution of a node is
\begin{equation} \label{scalefree}
Pr_k= C k^{-s}
\end{equation}
(see \cite{AB}), where $Pr_k$ is the probability for a node to have
$k$ adjacent nodes and the exponent $s$ lies within the interval
$(2,3)$. The networks with such  property usually have a low number
of strongly connected nodes (hubs) whereas the remaining ones are
weakly connected.  In our case this means that we have several
species-generalists and many species-specialists. Each generalist
(hub) is a center of a ''star subsystem" consisting of many species. We
study the dynamics of such subunits in Sect. \ref{pred}.

Some species can correspond to  nodes adjacent to two different hubs.
This means that two star subsystems are overlapped, or, in biological
terms, two different predators are feeding on the same prey.  Numerical simulations,
where scale-free networks were generated by the standard preferential attachment
algorithm, show that  this overlapping is small, the number
of nodes sharing two different centers $<< N+M$.


\subsection{Dynamics} \label{dyn}

 We consider  system (\ref{LVX}), (\ref{LVV})
 in the positive cone ${\mathbb R}^{N+M}_{>}=\{x=(x_1,...,x_N), v=(v_1,...,v_M): \ x_i > 0, v_j >0 \}$. This cone is invariant
 under dynamics (\ref{LVX}), (\ref{LVV}) and we assume that
  initial data  always lie in this cone:
\begin{equation}
x(0)=\phi \in  {\mathbb R}^N_{>}, \quad v(0)=\psi \in  {\mathbb R}^M_{>}.  \label{ID}
     \end{equation}
 We distinguish the following main cases:

{\bf PP} (predator-prey). If   $v_j$ are  preys  and $x_i$ are predators, then 
\begin{equation}
       a_{il}  \ge 0, \quad b_{jk} \ge 0, \quad r_i > 0, \quad \bar r_j > 0;
    \label{signs1}
     \end{equation}

{\bf MF} (facultative mutualism)
\begin{equation}
      a_{il} \ge 0, \quad b_{jk} \le 0, \quad r_i < 0  \quad\bar r_j > 0;
    \label{signs2}
     \end{equation}

{\bf MO} (obligatory mutualism)
\begin{equation}
      a_{il} \ge 0, \quad b_{jk} \le 0, \quad  r_i > 0 \quad \bar r_j < 0;
    \label{signs3}
     \end{equation}
and

{\bf C} (competition)
\begin{equation}
      a_{il} \le 0, \quad b_{jk} \ge 0, \quad r_i > 0, \quad \bar r_j > 0.
    \label{signs4}
     \end{equation}

Note that, if  $a_{il}  \le 0, \ b_{jk} \le 0, \quad r_i < 0, \quad \bar r_j < 0$,
then we are dealing with  the {\bf PP} case, where $v_j$ are  predators  and $x_i$ are preys.

Systems, where we observe a species-generalist and a number of species-specialists (for example, $M=1$ and
$N >> 1$, or $N=1$ and $M >> 1$) are omnipresented as important structural elements  in ecological networks.
In this case the  topology of interconnections  involves the so-called  ''star structure" \cite{Bas3}.

We have a pure star structure if all products $a_i b_i$  are of the same sign.
The case  $a_i b_i >0$ corresponds to an
{\bf M} - star  structure, and the case $ a_i b_i < 0$ corresponds to a {\bf P}- star  structure.
Dynamics of these star networks is
quite different.  We also consider mixed structures, where $a_i b_i$ may have different signs.

In subsequent sections we  describe the dynamics of  large
ecological networks.
 These
networks are considered as sets consisting of star structures.  This
approach connects contemporary ideas in network topology and
classical methods of dynamical system theory. First, we investigate
dynamical properties of a single star system,  and  then, using weak
overlapping property, we develop a perturbation approach for weakly
interacting star systems.    Result is a new description of dynamics, which essentially extend
  results on local stability of large networks
studied in \cite{May1, May2, Alles, Alles1}. It allows us to reveal different mechanisms of catastrophic phenomena in networks. We describe periodical dynamics, bursts, chaos,
and resonances.

\section{Transformation of Lotka-Volterra system}
\label{trans}

To study oscillations in the star systems (where $M \le N$),  we
make a transformation of equations (\ref{LVX}), (\ref{LVV}) to
another system with respect to variables $q=(q_1,\ldots,q_M)\in\mathbb R^M$
and $C=(C_1,\ldots,C_N)\in\mathbb R_{>}^N$, which is defined as
follows:
\begin{equation}
     \frac{dC_i}{dt}=  V_i(C, q),
    \label{LVXC}
     \end{equation}
\begin{equation}
     \frac{d^2 q_j}{dt^2}=\Big (\frac{dq_j}{dt} + \mu_j\Big )\Big ( F_j(C, q)
     - \sum_{k=1}^M d_{jk}\Big ( \frac{dq_k}{dt} +\mu_k\Big )\Big ),
    \label{LVQ}
     \end{equation}
     where $i=1,..., N$, $j=1,..., M$,
\begin{equation} \label{Vi}
     V_i(C, q)=C_i (\bar \gamma_i  -  \sum_{k=1}^N \gamma_{ik} C_k \exp({\bf A}_k\cdot q)),
\end{equation}
\begin{equation} \label{VFi}
F_j(C, q)=\bar r_j   - \sum_{k=1}^N b_{jk} C_k \exp({\bf A}_k\cdot q),
\end{equation}
and
\begin{equation} \label{GB}
\bar\gamma_i=-r_i +\sum_{m=1}^M a_{im}\mu_m.
\end{equation}
Here $\mu_1,\ldots,\mu_M$ are  positive constants and ${\bf
A}_i\cdot q=\sum_{k=1}^M a_{ik} q_k$. One can verify  the following
assertion.

\begin{lemma}
\label{Lham}{Let $q$ and $C$ be a solution to {\rm (\ref{LVXC})} and {\rm (\ref{LVQ})} with initial data
$$
q_k(0)=\alpha_k,\;q'_k(0)=\beta_k,\;k=1,\ldots,M\;\;\mbox{and}\;\;C_i(0)=C_i^0,\;i=1,\ldots,N,
$$
where $\beta_k>-\mu_k$, $k=1,\ldots,M$. Then the functions
\begin{equation}
     x_i(t)=C_i \exp(\sum_{k=1}^M a_{ik} q_k(t)), \quad
     v_k=\frac{dq_k}{dt} + \mu_k,
    \label{XR}
\end{equation}
solve system {\rm (\ref{LVX})}, {\rm (\ref{LVV})} with the initial conditions
$$
x_i(0)=C_i^0\exp(\sum_{k=1}^M a_{ik} \alpha_k), 
$$
Moreover, all solutions to system {\rm (\ref{LVX})}, {\rm (\ref{LVV})} can be obtain by solving  {\rm (\ref{LVXC})} and {\rm
(\ref{LVQ})} with appropriate initial conditions.}
\end{lemma}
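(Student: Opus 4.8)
The plan is to prove the lemma by direct substitution, handling the two assertions separately: first that the pair $(x,v)$ built from $(C,q)$ via (\ref{XR}) solves (\ref{LVX}), (\ref{LVV}), and then the completeness claim that every solution of the original system arises this way. The first part is a computation organized around the single identity $x_k = C_k\exp({\bf A}_k\cdot q)$ supplied by (\ref{XR}); the second part I would settle by combining the first with uniqueness for the Lotka--Volterra field.

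For the $x_i$--equation I would differentiate $x_i = C_i\exp({\bf A}_i\cdot q)$ by the product and chain rules,
$$
\frac{dx_i}{dt} = \frac{dC_i}{dt}\,\exp({\bf A}_i\cdot q) + x_i\sum_{k=1}^M a_{ik}\frac{dq_k}{dt},
$$
then substitute $dC_i/dt = V_i(C,q)$ from (\ref{LVXC}) into the first term and $dq_k/dt = v_k - \mu_k$ from (\ref{XR}) into the second. Using $C_k\exp({\bf A}_k\cdot q) = x_k$ turns $V_i$ into $x_i(\bar\gamma_i - \sum_k\gamma_{ik}x_k)$, so that after collecting terms the expression becomes $x_i\big(\bar\gamma_i - \sum_k a_{ik}\mu_k + \sum_k a_{ik}v_k - \sum_k\gamma_{ik}x_k\big)$. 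The definition (\ref{GB}) of $\bar\gamma_i$ makes $\bar\gamma_i - \sum_k a_{ik}\mu_k$ collapse to $-r_i$, which is precisely (\ref{LVX}).

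For the $v_j$--equation the observation $v_j = dq_j/dt + \mu_j$ gives $dv_j/dt = d^2q_j/dt^2$, so (\ref{LVQ}) reads $dv_j/dt = v_j\big(F_j(C,q) - \sum_k d_{jk}v_k\big)$ once each factor $dq_k/dt + \mu_k$ is recognized as $v_k$; applying $C_k\exp({\bf A}_k\cdot q) = x_k$ to (\ref{VFi}) identifies $F_j(C,q)$ with $\bar r_j - \sum_k b_{jk}x_k$, giving (\ref{LVV}). The stated initial conditions follow by evaluating (\ref{XR}) at $t=0$, and the hypothesis $\beta_k > -\mu_k$ is exactly what ensures $v_k(0) = \beta_k + \mu_k > 0$, so the data lie in the positive cone.

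For the completeness assertion I would take an arbitrary solution $(x,v)$ of (\ref{LVX}), (\ref{LVV}) with data $(\phi,\psi)\in{\mathbb R}^{N+M}_{>}$, fix arbitrary constants $\alpha_k$, and set $\beta_k = \psi_k - \mu_k$ and $C_i^0 = \phi_i\exp(-\sum_k a_{ik}\alpha_k) > 0$; solving (\ref{LVXC}), (\ref{LVQ}) with these data and pushing forward through (\ref{XR}) yields, by the first part, a solution of the original system with the same initial data, which uniqueness then forces to coincide with $(x,v)$. I expect the only delicate point to be this last step: the inverse map is not unique, since the phase space $(C,q,dq/dt)$ of the transformed system has dimension $N+2M$ versus $N+M$ for the original, and I would make precise that the free constants $\alpha_k$ only rescale $C_i$ while leaving the product $x_i = C_i\exp({\bf A}_i\cdot q)$, hence the recovered dynamics, unchanged.
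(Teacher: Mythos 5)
Your proof is correct, and it is essentially the argument the paper intends: the paper in fact offers no proof of this lemma at all, introducing it only with ``One can verify the following assertion,'' so a direct substitution is precisely what is being asserted. Your two computations are right: differentiating $x_i=C_i\exp({\bf A}_i\cdot q)$ and using (\ref{LVXC}) together with (\ref{GB}) collapses $\bar\gamma_i-\sum_k a_{ik}\mu_k$ to $-r_i$, yielding (\ref{LVX}), while recognizing each factor $dq_k/dt+\mu_k$ in (\ref{LVQ}) as $v_k$ turns that equation into (\ref{LVV}). On the completeness claim, your uniqueness argument is sound, with one small point worth making explicit: to conclude that the transformed solution covers the whole lifespan of $(x,v)$ you need a continuation argument --- on any compact time interval $dq_k/dt=v_k-\mu_k$ is bounded, hence $q$ is bounded and $C_i=x_i\exp(-{\bf A}_i\cdot q)$ stays in a compact subset of the positive cone, so the $(C,q)$-solution cannot blow up before $(x,v)$ does. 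A way to bypass this entirely is to construct the preimage directly: set $q_k(t)=\alpha_k+\int_0^t\bigl(v_k(s)-\mu_k\bigr)\,ds$ and $C_i(t)=x_i(t)\exp(-{\bf A}_i\cdot q(t))$, and verify by differentiation that this pair satisfies (\ref{LVXC}), (\ref{LVQ}); your closing observation that the constants $\alpha_k$ are free parameters of this non-unique inverse, rescaling $C_i$ without changing $x_i$, is correct and is exactly the right way to account for the dimension mismatch $N+2M$ versus $N+M$.
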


System (\ref{LVXC}) and (\ref{LVQ}) can be reduced to a first order system if we introduce  the new variables  $p_j$ by
$$
\frac{dq_j}{dt}+ \mu_j=\exp(p_j),\;j=1,\ldots,M.
$$
We obtain then
\begin{equation}
     \frac{dq_j}{dt}=\exp(p_j) -\mu_j
    \label{EQ}
     \end{equation}
     and
\begin{equation}
     \frac{dp_j}{dt}=F_j(C, q)  -  \sum_{l=1}^M d_{jl} \exp(p_l),
    \label{EP}
     \end{equation}
where $j=1,..., M$ .

\section{Hamiltonian} \label{Ham}

\subsection{Reduction to a Hamiltonian system}\label{red}

Equation (\ref{LVXC}) takes a particularly  simple form when
$\gamma_{ij}=0$ and $\bar \gamma_i=0$. Then   the right-hand side in
(\ref{LVXC})
 equals zero and hence $C_i$ is a constant. Therefore, if $p$ and $q$  solve system
(\ref{EQ}), (\ref{EP}) supplied with the initial conditions
$$
q_k(0)=\alpha_k\;\,\mbox{and}\;\; p_k(0)= \log(\beta_k + \mu_k),\;k=1,\ldots,M,
$$
then the corresponding solution to (\ref{LVX}), (\ref{LVV}) is given by  (\ref{XR}).

We assume additionally that $d_{jl}=0$. Then system
(\ref{EQ}), (\ref{EP}) can be rewritten as a Hamiltonian system
provided the matrices ${\bf A}$ and ${\bf B}$ satisfy the relations
\begin{equation}\label{K1a}
\sigma_l b_{lk}=\rho_k a_{kl},\;\;k=1,\ldots,N,\;\;l=1,\ldots,M,
\end{equation}
where $\rho_l$ and $\sigma_l \ne 0 $ are real numbers \cite
{Plank}(for biological interpretation of this condition see the remark at the end of this section). Indeed, let
$$
\tilde p_j=p_j\;\;\mbox{and}\;\; \tilde q_j=\sigma_j q_j.
$$
Then relations (\ref{EQ}) and (\ref{EP}) imply
\begin{equation}\label{K1b}
\frac{d \tilde p_j}{dt}=F_j(C, \tilde q)
\end{equation}
and
\begin{equation}\label{K1c}
\frac{d \tilde q_j}{dt}=\sigma_j(\exp(\tilde p_j) -\mu_j),\;\;j=1,\ldots,M.
\end{equation}
We introduce two functions
$$
\Phi(C, \tilde q)= \sum_{k=1}^N \rho_k C_k \exp(\sum_{l=1}^Ma_{kl}\sigma_l^{-1}\tilde q_l)-\sum_{k=1}^M\bar r_k \tilde q_k
$$
and
$$
\Psi(\tilde p)=\sum_{k=1}^M\sigma_k(\exp(\tilde p_k)-\mu_k \tilde p_k).
$$
One can verify  that
$$
\frac{\partial \Phi(C, \tilde q)}{\partial \tilde q_j}=-F_j,\;\;\mbox{}\;\;\frac{\partial \Psi(\tilde p)}{\partial \tilde
p_j}=\sigma_j(\exp(\tilde p_j) -\mu_j).
$$
Thus, system (\ref{K1b}), (\ref{K1c}) takes the form
\begin{equation}\label{K2b}
\frac{d\tilde p_j}{dt}=-\frac{\partial H(C, \tilde p, \tilde q)}{\partial \tilde q_j}
\end{equation}
and
\begin{equation}\label{K2c}
\frac{d\tilde q_j}{dt}=\frac{\partial H(C, \tilde p, \tilde q)}{\partial \tilde p_j},\;\;j=1,\ldots,M,
\end{equation}
where
\begin{equation}\label{K2a}
H(C, \tilde p, \tilde q)=\Phi(C, \tilde q)+\Psi(\tilde p).
\end{equation}

{\bf Remark} \label{Rem1}.  Relation (\ref{K1a}) admits a biological interpretation. Consider, for example, a predator-prey system.  Then
 condition (\ref{K1a}) means that the coefficients
$a_{kl}$ and $b_{lk}$ are proportional to
the frequency of meetings between $k$-th predator and $l$-th prey, when the predator-species
is feeding on  the prey-species. Note that if $M=1$ or $N=1$, this condition is fulfilled.
It corresponds to the case of a star  structure.

\section{Global stability of  dynamics}  \label{globs}

\subsection{Strong persistence and permanence}

Let us remind definitions of permanency and strong persistence. The general Lotka-Volterra system
\begin{equation}
     \frac{dy_i}{dt}=y_i(-  R_i  + \sum_{k=1}^N W_{ik} y_{k}), \quad i=1,..., N,
    \label{LVG}
     \end{equation}
     is said to be permanent if there exist $\delta >0$
and $D >0$ independent  of the initial data such that
\begin{equation}
     \lim \inf_{t \to +\infty}y_i(t) \ge \delta,
    \label{perm}
     \end{equation}
	\begin{equation}
     \lim \sup_{t \to +\infty}y_i(t) \le D
    \label{perm2}
     \end{equation}			
for every solution to (\ref{LVG}) (see \cite{Hofbauer}).
The system is strongly persistent, if $\delta$ and $D$ in (\ref{perm}) and
 (\ref{perm2})  may depend  on  initial data.

The strong persistence property
means that the system is ecologically stable and all species coexist. System (\ref{LVG})
can be strongly persistent
only if the corresponding linear system
\begin{equation}
WY=R
\label{eq1}
     \end{equation}
has a positive solution
(i.e., all $Y\in\Bbb R_>^N$)\cite{Hofbauer}.
Here $W$ is the matrix with the entries  $w_{ij}$, and $R$, $Y$ are vectors
with components $R_l, Y_m$, respectively.

Let us present some necessary and sufficient conditions of boundedness of
trajectories of system (\ref{K2b}), (\ref{K2c}) (we omit the sign of
tilde  to simplify notation). The trajectories $q(t)$ are bounded
under the following conditions: $\sigma_n >0, \quad n=1,..., M$, and
\begin{equation}
\lim \ \Phi(C, q) = +\infty \quad as \ |q| \to +\infty, \quad  q \in {\mathbb R}^N.
\label{stab25}
\end{equation}
In the next assertion we present some conditions, which guarantee
 the asymptotic property (\ref{stab25}).

\begin{theorem} \label{T1} Assume that
\begin{equation} \label{bound173}
\rho_k>0,\;\,k=1,\ldots,N.
\end{equation}
Then (\ref{stab25}) is equivalent to to the following:

\noindent
 the rank of matrix $\{b_{jl} \}$ is $M$ and there exists a  vector $z=(z_1,\ldots,z_N) \in
{\mathbb R}_>^N$
  such that
\begin{equation} \label{bound1}
\bar r_l=\sum_{j=1}^N b_{lj}z_j,\;\;  \quad l=1,\ldots,M.
\end{equation}
\end{theorem}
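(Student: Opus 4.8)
The plan is to use the proportionality (\ref{K1a}) to rewrite $\Phi$ as a sum of exponentials of linear forms, after which (\ref{stab25}) becomes a coercivity statement that I can decode by elementary convex geometry of the cone spanned by the columns of $\{b_{jl}\}$. Writing ${\bf b}_k=(b_{1k},\dots,b_{Mk})$ for the $k$-th column and $w_k=\rho_k C_k$, relation (\ref{K1a}) gives $a_{kl}\sigma_l^{-1}=b_{lk}/\rho_k$, so
\[
\Phi(C,q)=\sum_{k=1}^N w_k\exp\!\big(\rho_k^{-1}\,{\bf b}_k\cdot q\big)-\sum_{l=1}^M\bar r_l\,q_l ,
\]
where by (\ref{bound173}) and $C\in{\mathbb R}_>^N$ every $w_k>0$. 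I will show that (\ref{stab25}), i.e.\ $\Phi(q)\to+\infty$ as $|q|\to\infty$, is equivalent to the two listed conditions.

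For sufficiency, assume $\mathrm{rank}\,\{b_{jl}\}=M$ and $\bar r_l=\sum_j b_{lj}z_j$ with all $z_j>0$. Then $\sum_l\bar r_l q_l=\sum_k z_k({\bf b}_k\cdot q)$, so $\Phi=\sum_k h_k({\bf b}_k\cdot q)$ with $h_k(t)=w_k e^{t/\rho_k}-z_k t$. Because $w_k,z_k,\rho_k>0$, each $h_k$ is bounded below and tends to $+\infty$ as $|t|\to\infty$. Since the rank is $M$, the linear map $q\mapsto({\bf b}_k\cdot q)_{k=1}^N$ is injective, hence $|q|\to\infty$ forces $\max_k|{\bf b}_k\cdot q|\to\infty$; the summand attaining this maximum drives $\Phi$ to $+\infty$ while the remaining ones stay bounded below, which is (\ref{stab25}).

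For necessity I argue by contraposition, exhibiting in each failure a direction $u$ along which $\Phi$ does not grow. If the rank is less than $M$ there is $u\neq0$ with ${\bf b}_k\cdot u=0$ for all $k$; then $\Phi(su)=\sum_k w_k-s\sum_l\bar r_l u_l$ is affine in $s$ and cannot tend to $+\infty$ in both directions, so (\ref{stab25}) fails. Hence the rank is $M$. For the remaining condition set $K=\mathrm{cone}({\bf b}_1,\dots,{\bf b}_N)$, a closed convex cone, and suppose $\bar r\notin\mathrm{int}\,K$. A separating hyperplane (if $\bar r\notin K$) or a supporting hyperplane at the boundary point (if $\bar r\in\partial K$) produces $u\neq0$ with ${\bf b}_k\cdot u\le0$ for all $k$ and $\bar r\cdot u\ge0$; along $q=su$ with $s\to+\infty$ every exponent is $\le0$, so the exponential part stays $\le\sum_k w_k$, while $-s(\bar r\cdot u)\le0$, whence $\Phi(su)$ is bounded above — again contradicting (\ref{stab25}). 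Therefore $\bar r\in\mathrm{int}\,K$, and choosing $\varepsilon>0$ so small that $\bar r-\varepsilon\sum_k{\bf b}_k\in K$ and writing the latter as $\sum_k y_k{\bf b}_k$ with $y_k\ge0$ yields $\bar r=\sum_k(y_k+\varepsilon){\bf b}_k$ with strictly positive coefficients, i.e.\ (\ref{bound1}).

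The heart of the argument is the necessity direction: the equivalence between coercivity of the exponential sum and the membership $\bar r\in\mathrm{int}\,K$, and the identification of $\mathrm{int}\,K$ with the strictly positive conic combinations of the columns. This is where the separation theorems enter and where the case split $\bar r\notin K$ versus $\bar r\in\partial K$ must be handled. It is also the place where strictness matters: a representation (\ref{bound1}) with some $z_j=0$ corresponds to $\bar r$ on $\partial K$ and leaves a direction along which $\Phi$ merely stays bounded, so only the open-cone case (all $z_j>0$) yields growth in every direction.
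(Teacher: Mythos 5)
Your proof is correct and follows essentially the same route as the paper's: both reduce (\ref{stab25}) to the statement that $\bar r$ lies in the interior of the convex cone generated by the ($\rho$-scaled) columns of $\{b_{jl}\}$ --- the paper via polar cones and the bipolar theorem $(({\mathbb K})^*)^*={\mathbb K}$, you via the separation and supporting-hyperplane arguments that underlie that theorem. Your write-up is in fact more detailed than the paper's, which merely asserts the key equivalence between coercivity of $\Phi$ and the cone condition, whereas you prove it: the decomposition $\Phi=\sum_k h_k({\bf b}_k\cdot q)$ for sufficiency, the explicit non-coercive directions for necessity, and the $\varepsilon$-perturbation identifying interior points of the cone with strictly positive combinations.
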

\begin{proof}

Let ${\mathbb K}$ be the closed convex set
$$
{\mathbb K}=\{\eta \in {\bf R}^M:  \eta_l= \sigma_l^{-1}
\sum_{k=1}^N a_{kl} z_k,  \  z_l \ge 0, l=1,...,N \}.
$$
Since all numbers $\rho_k$ and $C_k$ are positive, the property ({i}) is
equivalent to
$$
q \in {\mathbb  K}^* \implies \bar r \cdot q <0,
$$
where
$$
{\mathbb  K}^* =\{ \xi \in {\mathbb R}^M : \xi\cdot \eta \le 0 \quad \mbox{for \ all }\eta \in {\mathbb K} \}.
$$
The last property can be also formulated as $\bar r$ belongs to the
interior of $ (({\mathbb K})^*)^*$. Using that $(({\mathbb
K})^*)^*={\mathbb K}$, we obtain (\ref{stab25}) is equivalent to $\bar r \in $ the interior
of ${\mathbb K}$, which is exactly the assertion of Theorem due to (\ref{K1a}).

\end{proof}



  The conditions $\bar \gamma_i=0$, $i=1,...,N$ and (\ref{bound1})
means that  system (\ref{LVX}), (\ref{LVV})  has a positive equilibrium in ${\mathbb R}_>^{N+M}$. So, we obtain

\begin{corollary} \label{Corol}
{Let $d_{kl}= \gamma_{ij}=0$ in (\ref{LVX}), (\ref{LVV})
and let relation (\ref{K1a}) be fulfilled with positive
$\rho_k$ and $\sigma_l$. Then system (\ref{LVX}), (\ref{LVV}) is
strongly persistent if and only if the rank of the matrix $A$ is $M$
and algebraic systems
\begin{equation}
      \sum_{k=1}^M a_{ik} v_k =r_i,
    \label{LVX4}
     \end{equation}
\begin{equation}
       \sum_{l=1}^N b_{jl} x_l=\bar r_j
    \label{LVX4N}
     \end{equation}		
have  positive solutions.
}
\end{corollary}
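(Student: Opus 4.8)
The plan is to derive Corollary~\ref{Corol} from Theorem~\ref{T1} by reducing the full system to the Hamiltonian setting of Section~\ref{red} and then combining the boundedness criterion with the necessary condition for strong persistence recorded after \eqref{eq1}. Since we are given $\gamma_{ij}=0$ and $d_{kl}=0$, and since strong persistence is to be characterized, I first impose $\bar\gamma_i=0$ for all $i$; by \eqref{GB} this is exactly the requirement that a positive equilibrium exist in the $x$-direction, and it is forced by the necessary condition that \eqref{eq1} have a positive solution. Under these hypotheses equation \eqref{LVXC} makes each $C_i$ constant, so by Lemma~\ref{Lham} every trajectory of \eqref{LVX},\eqref{LVV} corresponds to a trajectory of the Hamiltonian system \eqref{K2b},\eqref{K2c} with Hamiltonian \eqref{K2a}, where the positive constants $C_k$ are fixed by the initial data.

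Next I would argue the equivalence of strong persistence with the boundedness of the Hamiltonian trajectories together with the existence of the equilibrium. Because the system is Hamiltonian (hence conservative) and the $C_k$ are frozen positive constants, the level sets of $H(C,\tilde p,\tilde q)=\Phi(C,\tilde q)+\Psi(\tilde p)$ are invariant. The kinetic part $\Psi(\tilde p)=\sum_k\sigma_k(\exp(\tilde p_k)-\mu_k\tilde p_k)$ is coercive in $\tilde p$ whenever $\sigma_k>0$ and $\mu_k>0$, so boundedness of orbits hinges precisely on the coercivity \eqref{stab25} of the potential $\Phi$ in $\tilde q$. Theorem~\ref{T1}, under the standing assumption \eqref{bound173} that $\rho_k>0$, converts \eqref{stab25} into the two algebraic conditions: that $\{b_{jl}\}$ have rank $M$ and that \eqref{bound1} admit a solution $z\in\mathbb R_>^N$. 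Using the proportionality \eqref{K1a} with positive $\rho_k,\sigma_l$, the rank of $\{b_{jl}\}$ equals the rank of $A$, and \eqref{bound1} is exactly the solvability of \eqref{LVX4N} with a positive $x$. Meanwhile the companion condition $\bar\gamma_i=0$ rearranges via \eqref{GB} into the solvability of \eqref{LVX4} with positive $v_k=\mu_k$, which is where the positive constants $\mu_k$ enter as the prey-equilibrium abundances.

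Assembling these pieces gives both implications. For necessity, strong persistence forces the positive equilibrium (by the Hofbauer criterion applied to \eqref{eq1}), hence $\bar\gamma_i=0$ and the solvability of \eqref{LVX4N}; moreover unbounded orbits would violate \eqref{perm2}, so \eqref{stab25} must hold, yielding the rank condition through Theorem~\ref{T1}. For sufficiency, the algebraic conditions \eqref{LVX4},\eqref{LVX4N} deliver \eqref{stab25} and $\bar\gamma_i=0$; coercivity of $H$ then confines every orbit to a compact level set bounded away from the coordinate hyperplanes (since $\Phi\to+\infty$ also as any $\tilde q_l\to\pm\infty$ controls the exponential factors $\exp(\mathbf A_k\cdot q)$ and hence the $x_i$ via \eqref{XR}), giving uniform upper and lower bounds and therefore strong persistence.

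I expect the main obstacle to be the careful translation of ``boundedness of the Hamiltonian orbits'' into the two-sided bounds \eqref{perm},\eqref{perm2} on the \emph{original} variables $x_i,v_k$. The change of variables \eqref{XR} is exponential, $x_i=C_i\exp(\sum_k a_{ik}q_k)$, so bounding $q$ bounds $x_i$ from above and below only after checking that the coercivity \eqref{stab25} controls $q$ in \emph{all} directions, including those where some $a_{ik}$ vanish; here the rank-$M$ hypothesis on $A$ (equivalently on $\{b_{jl}\}$) is what prevents $q$ from escaping along a null direction of the exponent while leaving $\Phi$ bounded. The lower bound $v_k=\exp(\tilde p_k)\ge\delta>0$ likewise requires that the conserved energy keep $\tilde p_k$ from diverging to $-\infty$, which follows from coercivity of $\Psi$; verifying that these bounds are genuinely independent of time (not merely along subsequences) is the delicate point, and it rests essentially on energy conservation making the level sets compact once both $\Phi$ and $\Psi$ are coercive.
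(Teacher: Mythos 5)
You follow essentially the same route as the paper: choose $\mu>0$ solving (\ref{LVX4}) so that $\bar\gamma_i=0$ in (\ref{GB}), freeze the constants $C_i$ via Lemma \ref{Lham}, convert the coercivity condition (\ref{stab25}) into the rank condition plus (\ref{bound1}) through Theorem \ref{T1}, use compactness of the level sets of $H=\Phi+\Psi$ for sufficiency, and use the Hofbauer necessary condition attached to (\ref{eq1}) for the existence of a positive equilibrium. Your sufficiency direction and the equilibrium half of necessity are sound, and are in fact spelled out in more detail than the paper's own one-line proof, which merely observes that $\bar\gamma_i=0$ together with (\ref{bound1}) amounts to the existence of a positive equilibrium and then concludes. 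In particular, the remark that the rank of $\{b_{jl}\}$ equals the rank of $A$ under (\ref{K1a}), and the care you take in transferring bounds on $(q,p)$ to two-sided bounds on $(x,v)$ through the exponential change of variables (\ref{XR}), are exactly the right points.

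The genuine gap is the step ``unbounded orbits would violate (\ref{perm2}), so (\ref{stab25}) must hold.'' This invokes the converse of the boundedness criterion, namely that boundedness of all orbits forces coercivity of $\Phi$; neither you nor the paper proves this, and it is false. Indeed, suppose the rank of $B$ is less than $M$ while (\ref{bound1}) holds, say $\bar r=Bz$ with $z$ positive. For any $\xi$ with $\xi^{T}B=0$, equation (\ref{EP}) (with $d_{jl}=0$) gives $\frac{d}{dt}(\xi\cdot p)=\xi\cdot\bar r=\xi^{T}Bz=0$, so $\xi\cdot p$ is a constant of motion; the flow is confined to invariant affine subspaces on which all orbits can remain bounded even though (\ref{stab25}) fails. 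A concrete instance: $N=1$, $M=2$, $a_{11}=a_{12}=b_{11}=b_{21}=1$, $\bar r_1=\bar r_2=1$, $r_1>0$, for which (\ref{K1a}) holds with $\rho_1=\sigma_1=\sigma_2=1$ and both (\ref{LVX4}) and (\ref{LVX4N}) have positive solutions. Then $v_1/v_2$ is conserved, the system collapses to a planar Lotka--Volterra predator--prey system whose orbits are closed curves, so it is strongly persistent; yet the rank of $A$ is $1<M$. Hence the ``only if'' part of the rank condition cannot be derived the way you propose --- the example shows it fails as stated --- and the paper's own proof simply never addresses this direction.
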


The following result means that 
when there is a  self-limitation,
condition (\ref{K1a}) guarantees  global stability of  ecological equilibria.

The next theorem deals with a general Lotka-Volterra system
\begin{equation}\label{LVXd}
     \frac{dx_i}{dt}=x_i (-r_i  + \sum_{j=1}^M (a_{ij}+A_{ij}) v_j -  \ \sum_{l=1}^N \gamma_{il} x_l),
     \end{equation}
\begin{equation}\label{LVVd}
     \frac{dv_j}{dt}=v_j (\bar r_j   -  \sum_{l=1}^N (b_{ji}+B_{ji}) x_i  - \sum_{k=1}^M d_{jk} v_k),
     \end{equation}
     where, as before, $i=1,..., N$ and $j=1,..., M$. We denote by ${\bf a}$, ${\bf A}$, ${\boldsymbol{\gamma}}$, $\textbf{b}$, ${\bf B}$ and ${\bf d}$ the matrices 
     with entries  $a_{ij}$, $A_{ij}$, $\gamma_{il}$, $b_{ji}$, $B_{ji}$ and $d_{jk}$ respectively and introduce the block-matrix
\begin{equation*}
    \mathfrak{M} =
     \left (\begin{array}{ll}
{\boldsymbol{\gamma}}&-{\bf A} \\
{\bf B}&{\bf d}
\end{array}
\right )\left (
\begin{array}{ll}
{\boldsymbol{\rho}}^{-1}&0 \\
0&{\boldsymbol{\sigma}}^{-1}
\end{array}
\right ),
\end{equation*}
where ${\boldsymbol{\rho}}^{-1}$ and ${\boldsymbol{\sigma}}^{-1}$ are diagonal matrices with the entries $\rho_1^{-1},\ldots,\rho_N^{-1}$ and $\sigma_1^{-1},\ldots,\sigma_M^{-1}$ on the diagonal respectively.
\begin{theorem}\label{T4s} We assume that system {\rm (\ref{LVXd})}, {\rm (\ref{LVVd})} has a positive equilibrium. Let the matrices ${\bf a}$ and ${\bf b}$ satisfy condition {\rm (\ref{K1a})} with positive $\rho_i$ and $\sigma_j$. If the matrix $\mathfrak{M}$ is positive definite, i.e.
$$
(\xi,\eta)\mathfrak{M}(\xi,\eta)^T>0
$$
for all $\xi\in\Bbb R^N$ and $\eta\in\Bbb R^M$ such that $|\xi|+|\eta|\neq 0$ then system {\rm (\ref{LVXd})}, {\rm (\ref{LVVd})} is permanent.

\end{theorem}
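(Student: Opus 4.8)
The plan is to build a weighted Volterra--Lyapunov function of the classical type used in \cite{Hofbauer} and to show that condition (\ref{K1a}), together with positive definiteness of $\mathfrak{M}$, makes its derivative along trajectories an everywhere negative quadratic form in the deviations from the assumed positive equilibrium. Denote that equilibrium by $(x^*,v^*)\in\R_{>}^{N+M}$ and put $\xi_i=x_i-x_i^*$, $\eta_j=v_j-v_j^*$. First I would introduce
$$
V=\sum_{i=1}^N\rho_i\Big(x_i-x_i^*-x_i^*\ln\frac{x_i}{x_i^*}\Big)+\sum_{j=1}^M\sigma_j\Big(v_j-v_j^*-v_j^*\ln\frac{v_j}{v_j^*}\Big),
$$
with weights taken to be exactly the positive numbers $\rho_i,\sigma_j$ from (\ref{K1a}). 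Each summand is nonnegative, strictly convex, vanishes only at the equilibrium, and tends to $+\infty$ both as a coordinate approaches $0$ and as it approaches $+\infty$; hence $V$ is proper and bounded below on the open cone $\R_{>}^{N+M}$, with sublevel sets that are compact subsets of that cone.

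Next I would differentiate $V$ along (\ref{LVXd}), (\ref{LVVd}). Because each per-capita rate $\dot x_i/x_i$ and $\dot v_j/v_j$ is affine in the state, subtracting the equilibrium relations renders it \emph{exactly} linear in $\xi,\eta$, so that $\dot V$ is an exact (not merely leading-order) quadratic form,
$$
\dot V=-\sum_{i,l}\rho_i\gamma_{il}\xi_i\xi_l+\sum_{i,j}\rho_i(a_{ij}+A_{ij})\xi_i\eta_j-\sum_{i,j}\sigma_j(b_{ji}+B_{ji})\eta_j\xi_i-\sum_{j,k}\sigma_j d_{jk}\eta_j\eta_k.
$$
The decisive step is that the unperturbed coupling terms cancel: (\ref{K1a}) reads $\rho_i a_{ij}=\sigma_j b_{ji}$, so $\sum_{i,j}(\rho_i a_{ij}-\sigma_j b_{ji})\xi_i\eta_j=0$ and only the $\mathbf A,\mathbf B,\boldsymbol{\gamma},\mathbf d$ contributions survive. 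Writing $w=(\xi,\eta)^T$, $P=\mathrm{diag}(\rho_1,\dots,\rho_N,\sigma_1,\dots,\sigma_M)$ and $G=\begin{pmatrix}\boldsymbol{\gamma}&-\mathbf A\\ \mathbf B&\mathbf d\end{pmatrix}$, the surviving terms collapse to $\dot V=-\,w^T P G\,w$.

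To bring in the hypothesis I would exploit the congruence linking $PG$ to $\mathfrak M=GP^{-1}$. Since $P$ is symmetric and invertible, $P\,\mathfrak M\,P=P(GP^{-1})P=PG$, whence for every $w\neq 0$
$$
w^T PG\,w=(Pw)^T\mathfrak M(Pw)>0
$$
by positive definiteness of $\mathfrak M$ (note $Pw\neq0$ as $P$ is invertible). Thus $\dot V<0$ at every state other than $(x^*,v^*)$, and $\dot V=0$ only there. Combined with properness of $V$ and invariance of the cone, LaSalle's principle yields that every trajectory in $\R_{>}^{N+M}$ converges to $(x^*,v^*)$; in particular the coordinates remain in a compact subset of the open cone and satisfy the bounds (\ref{perm}), (\ref{perm2}) with $\delta=\min\{x_i^*,v_j^*\}$ and $D=\max\{x_i^*,v_j^*\}$, quantities that depend only on the equilibrium and not on the initial data. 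This is precisely permanence.

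I expect the main obstacle to be the two algebraic identities rather than the analysis: choosing the weights so that (\ref{K1a}) exactly cancels the prey--predator coupling, and verifying the congruence $PG=P\mathfrak M P$ that converts the stated positivity of $\mathfrak M$ into negativity of $\dot V$. The analytic part—properness of $V$ and the passage from a strict Lyapunov function to the uniform bounds of permanence—is routine once these identities are established. The only point demanding care is that the hypothesis is phrased for the generally non-symmetric matrix $\mathfrak M$, so ``positive definite'' must be interpreted as positivity of the associated quadratic form, which is exactly what the congruence argument consumes.
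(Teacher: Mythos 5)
Your proof is correct and is essentially the same argument as the paper's: the paper changes variables $X_i=\rho_i x_i$, $X_{N+j}=\sigma_j v_j$, observes that condition (\ref{K1a}) makes the block matrix $\mathfrak{A}$ skew-symmetric (so positive definiteness of $\mathfrak{A}+\mathfrak{M}$ is equivalent to that of $\mathfrak{M}$), and then invokes the classical Lyapunov-function permanence theorem for Lotka--Volterra systems from \cite{Hofbauer}. Your weighted Volterra--Lyapunov function with weights $\rho_i,\sigma_j$ is exactly the function behind that cited theorem, and your cancellation of the $\mathbf{a},\mathbf{b}$ coupling in $\dot V$ is the same use of (\ref{K1a}) as the paper's skew-symmetry observation, so the two proofs differ only in that yours is self-contained.
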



{\bf Proof}.
First let us make a change of variables $X_i=\rho_i x_i$, $i=1,\ldots,N$, and $X_{N+j}=\sigma_j v_j$, $j=1,\ldots,M$. 
Then we obtain the system 
\begin{equation}\label{Syst3}
\frac{dX_m}{dt}=X_m(-R_m-\sum_{k=1}^{N+M}(\mathfrak{A}_{mk}+\mathfrak{M}_{mk})X_k),\;\;m=1,\ldots,N+M,
\end{equation}
where $R=(r_1,\ldots,r_N,\bar{r}_1,\ldots,\bar{r}_M)$ and
\begin{equation*}
    \mathfrak{A} =
     \left (\begin{array}{ll}
0&-{\bf a} \\
{\bf b}&0
\end{array}
\right )\left (
\begin{array}{ll}
{\boldsymbol{\rho}}^{-1}&0 \\
0&{\boldsymbol{\sigma}}^{-1}
\end{array}
\right ).
\end{equation*}
System (\ref{Syst3}) has a positive equilibrium and it is permanent if the matrix $\mathfrak{A}+\mathfrak{M}$ is positive definite. the last is equivalent to positive definiteness of the matrix
$\mathfrak{M}$ due to (\ref{K1a}).

According to  Theorem  \ref{T4s}
  even small
self-limitation and concurrence  can stabilize a system with
a Hamiltonian structure.  In this case an elementary analysis
(see subsection \ref{hamxv}) shows  that, in an equilibrium state,   all the species coexist.  This means that ecological systems with weakly perturbed
Hamiltonian structure  can have large biodiversity. If the Hamiltonian condition
 (\ref{K1a}) is violated then for small $\lambda_g$ and $\lambda_d$ the competition exclusion principle shows that only a single species can survive.

Let us consider the stability for perturbed Hamiltonian systems. Note
that these perturbations can be connected with more complicated interaction topology
and violations of condition (\ref{K1a}). In the Hamiltonian case we have the square interaction matrix
$W=W_H$  of the size $N+M$, which can be  decomposed in $4$ blocks
$$
\begin{bmatrix}
    0       & A  \\
    B       &  0  \\
\end{bmatrix}
$$
that corresponds to $2$ trophic levels. Moreover, matrices
$A$ and $B$ are connected via condition (\ref{K1a}). Let us consider
the perturbed interaction matrix $W=W_H + \tilde W$.  Let us define the vector
$d$  with $N+M$ components by
$
d=(\rho_1, ..., \rho_{N},   \sigma_1, ..., \sigma_M).
$


Consider the matrix    $\tilde W^{(d)}$ with the entries
$$
\tilde W^{(d)}_{kl}= d_k d_l^{-1} \tilde  W_{kl}, \quad k,l=1,...M+N.
$$

If the  matrix $\tilde W^{(d)}$  is negatively defined  and for  system  (\ref{LVG})  there exists a positive equilibrium, then system (\ref{LVG})  is  permanent.
 It can be shown by the same arguments as in the proof of Theorem
\ref{T4s}.

Theorems \ref{T1} and  \ref{T4s} state the results on global stability.
 In real situations, many ecological networks
are unstable: catastrophic phenomena are possible. In Sect. \ref{pred} and \ref{random}
 we study mechanisms of such phenomena  by the Hamiltonian methods.

\subsection{Stability for large number of species} \label{largeN}

We start this section by showing that the
Lotka-Volterra system (\ref{LVG}) of a random structure  has no positive equilibria.
Consider the set  of all $N \times N$ matrices $A$ with entries
$a_{ij}$ uniformly bounded by a constant
$$
| A_{ij}| < K,
$$
such that each row of $A$ is non-zero and contains at most $M_r$ non-zero entries and
 each column is also non-zero and contains at most $M_c$ non-zero entries.
Using the standard Lebesgue measure $\mu$ defined on ${\mathbb M}_{K, N}$, for any measurable
$C$  we introduce the probability $P(C)=\mu(C)/\mu({\mathbb M}_{K, N})$.

\begin{theorem}\label{T3}   {  Let $B$ be a  vector with $N$ components and
$A \in {\mathbb M}_{K, N}$.
Then the probability $P_N$ that the linear equation $AY=B$ has a positive solution $Y$
tends to zero as $N \to +\infty$}.
\end{theorem}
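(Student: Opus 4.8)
The plan is to exhibit, for a typical sparse matrix $A$, a single equation of the system $AY=B$ that cannot be satisfied by any positive vector $Y$, and then to show that such an ``unsatisfiable'' equation is present with probability tending to $1$. Write $S_i=\{j:a_{ij}\neq 0\}$ for the support of the $i$-th row; by hypothesis $1\le |S_i|\le M_r$. I call row $i$ \emph{obstructing} if either all entries $a_{ij}$, $j\in S_i$, are positive and $B_i\le 0$, or all of them are negative and $B_i\ge 0$. If row $i$ is obstructing then $\sum_{j\in S_i}a_{ij}Y_j$ is strictly positive (resp. strictly negative) for every $Y\in\mathbb R_{>}^N$, so the $i$-th equation $\sum_j a_{ij}Y_j=B_i$ has no solution in the positive cone. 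Consequently
$$
\{AY=B\text{ has a positive solution}\}\subseteq \bigcap_{i=1}^N\{\text{row } i\text{ is not obstructing}\},
$$
and it suffices to bound the probability of the right-hand event.

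The second step is a uniform lower bound on the chance that a fixed row is obstructing. Under the normalized Lebesgue measure, once the support $S_i$ is fixed the nonzero entries $\{a_{ij}:j\in S_i\}$ are independent and uniformly distributed on $(-K,K)$; in particular each is positive or negative with probability $1/2$, independently. Hence the probability that all $|S_i|$ of them share a common sign equals $2^{1-|S_i|}$, and a short case check on the sign of $B_i$ shows that the probability that row $i$ is obstructing is at least $2^{-|S_i|}\ge 2^{-M_r}=:c>0$, a constant independent of $N$ (and of $B$).

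The final step deals with the dependence among rows, which is the only genuinely delicate point, because the column constraint ``at most $M_c$ nonzeros per column'' couples the supports of different rows. I would handle it by conditioning on the entire sparsity pattern $\Pi=(S_1,\ldots,S_N)$. Given $\Pi$, the events $\{\text{row } i\text{ is obstructing}\}$ depend on the signs of pairwise disjoint collections of entries (those lying in rows $1,\ldots,N$ respectively), and these signs are mutually independent; therefore the events are conditionally independent. Using the per-row bound from the previous step,
$$
P\big(\text{no obstructing row}\mid \Pi\big)=\prod_{i=1}^N\big(1-P(\text{row }i\text{ obstructing}\mid\Pi)\big)\le (1-c)^N.
$$
Since this estimate is uniform over every admissible pattern $\Pi$, it survives averaging over the patterns regardless of how $\mu$ distributes its mass among them, giving $P_N\le (1-c)^N\to 0$ as $N\to+\infty$. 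I expect the main obstacle to be precisely this measure-theoretic bookkeeping: justifying that conditioning on $\Pi$ makes the sign variables a genuine family of independent fair coins (so the product formula is legitimate), and that uniformity in $\Pi$ lets one sidestep the awkward definition of Lebesgue measure on the stratified set $\mathbb M_{K,N}$. Note that only the row sparsity $M_r$ enters; the column bound $M_c$ is irrelevant here and would instead govern the analogous statement for the transposed system.
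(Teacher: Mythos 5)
Your proof is correct, but it follows a genuinely different route from the paper's. The paper uses a one-line symmetry trick: negating the $k$-th \emph{column} of $A$ (the text says ``row'', an evident slip) is a measure-preserving involution of $\mathbb{M}_{K,N}$, and it sends any matrix whose (a.s.\ unique) solution $Y=A^{-1}B$ is positive to a matrix whose solution has exactly one negative coordinate, the $k$-th one. The $N$ images of the ``good'' event under these flips are pairwise disjoint, disjoint from the good event, and each has probability $P_N$, whence $(N+1)P_N\le 1$ and $P_N\le 1/N\to 0$. Your argument instead builds a local obstruction: a row whose nonzero entries all share one sign incompatible with the sign of $B_i$ rules out any positive solution, and conditional independence of the rows' sign patterns given the support yields $P_N\le(1-2^{-M_r})^N$. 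The two proofs trade generality against strength. The paper's bound is only $O(1/N)$, but it uses no sparsity at all: it works verbatim for dense matrices, i.e.\ for any ensemble invariant under column sign flips. Yours gives an exponentially small bound, but it genuinely needs the bounded row-sparsity hypothesis; it degrades to uselessness once $M_r\gtrsim\log_2 N$, exactly the regime the symmetry argument still covers. Both proofs rest on the same underlying fact --- conditioned on the sparsity pattern, the signs of the entries are independent fair coins --- and both inherit the same ambiguity in the paper's definition of ``standard Lebesgue measure'' on the stratified set $\mathbb{M}_{K,N}$; your uniform-over-patterns conditioning is a reasonable way to finesse this, and the paper implicitly needs the analogous remark (that column flips preserve each stratum) which it never states.
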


{\bf Proof}. Let  $Y=(y_1,..., y_N)$ be a positive solution of (\ref{eq1}).
We can assume that all
row and columns of $A$ contain
at least non-zero entries
(probability to have a matrix with a zero row or column is $0$).
Let us change a sign of $k$-th row in $A$ that gives a matrix $A^{(k)}$.  Equation $A^{(k)} Y=B$ has the solution $Y= (x_1,...,-x_k,..., x_N)$, which is not positive. So, each matrix $A$, for which $AY=B$ has a positive solution, corresponds at least $N$ different matrices, for which these solutions are not positive.
			Therefore, $P_N \le 1/N$.
			
This result admits an ecological interpretation. Consider a
random large ecological network. Assume that the connectance of this network
is bounded. If we have no restrictions on ecological interactions,
such network has no positive stationary states and thus it is not ecologically stable with a probability close to
$1$. A possible variant of such a restriction can be a sign restrictions on the coefficients of the system or
condition (\ref{K1a}), which leads to a Hamiltonian structure.  

The next example demonstrates that the notion of structural stability for large Lotka-Volterra system must be used with discretion.

{\bf Example.} Consider the general Lotka-Volterra system (\ref{LVG}) where $A$ is the identity matrix. Then the matrix $A$ has the eigenvalue $1$ only. Consider the matrix
$A_\varepsilon=A-\varepsilon B$ where $B$ is the $N\times N$ matrix with all elements equal $1$. If we take the sum of all rows we get that it is equal to $1-N\varepsilon$. Therefore, if $\varepsilon=1/N$ the matrix $A_\varepsilon$ has zero eigenvalue.

The following theorem says that the conditions proved in Theorem \ref{T1} which are equivalent to (\ref{stab25}) are satisfied with probability close to $1$ for large $N$.

\begin{theorem}\label{T2}
{Let  $M$ be fixed and let
$\bar r_j$, $j=1,...,M$, be random numbers mutually independent and normally
distributed according to standard normal law ${\bf N}(r_0,
\sigma^2)$,  where $\sigma \ne 0$ and $r_0 >0$. Let also
coefficients $b_{jk}$, $j=1,\ldots,M$ and $k= 1, \ldots, N$, be
mutually independent random numbers subjected to the normal law
${\bf N}(0, 1)$.
 Then  condition {\rm (\ref{bound1})} is fulfilled and the matrix $B$ has rank $M$ with probability $1- \epsilon_N$,
where $\epsilon_N\to 0$  as $N \to +\infty$.}
\end{theorem}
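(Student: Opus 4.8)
The plan is to recast condition (\ref{bound1}) as a geometric statement about the columns of the random matrix $B$ and to exploit that $M$ is fixed while $N\to+\infty$. Write $b^{(k)}=(b_{1k},\ldots,b_{Mk})\in\mathbb R^M$ for the $k$-th column of $B$; these are $N$ independent standard Gaussian vectors in $\mathbb R^M$, each symmetric about the origin. Condition (\ref{bound1}) asks precisely that $\bar r=\sum_{k=1}^N z_k b^{(k)}$ admit a solution with every $z_k>0$, i.e. that $\bar r$ lie in the conical hull of the columns with a \emph{strictly} positive representation. So the whole statement reduces to showing that, with probability $1-\epsilon_N$, the columns positively span $\mathbb R^M$ (their conical hull is all of $\mathbb R^M$) and have rank $M$; once this holds, the required $z$ exists for the given $\bar r$.

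The first step is a purely geometric lemma: a finite family that contains, for each $i=1,\ldots,M$, one vector within a small fixed angle $\theta=\theta(M)$ of $+e_i$ and one within angle $\theta$ of $-e_i$ positively spans $\mathbb R^M$. This is a one-line estimate: for a unit $u$, choose the coordinate $i$ of largest modulus (so $|u_i|\ge 1/\sqrt M$) and the matching sign, and for the corresponding near-axis unit vector $w$ bound $\langle u,w\rangle\ge \tfrac{1}{\sqrt M}\cos\theta-\sin\theta$, which is positive as soon as $\tan\theta<1/\sqrt M$. Positive spanning then has two consequences I will use: the conical hull being all of $\mathbb R^M$ forces the linear span to be $\mathbb R^M$, so $B$ has rank $M$; and, applying it to $-\sum_k b^{(k)}$, it produces a relation $\sum_k c_k b^{(k)}=0$ with every $c_k>0$. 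Writing any $\bar r$ as a nonnegative combination $\sum_k z_k^0 b^{(k)}$ and replacing $z_k^0$ by $z_k^0+\lambda c_k$ with $\lambda>0$ then yields a representation with all coordinates strictly positive, which is exactly (\ref{bound1}).

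The second step is probabilistic and uses $N\to+\infty$. For each of the $2M$ axis directions $\pm e_i$, the cone of half-angle $\theta$ about it has strictly positive standard-Gaussian measure $p=p(M,\theta)>0$. The probability that none of the $N$ independent columns falls into a prescribed cone is $(1-p)^N$, so by a union bound the probability that some cone is missed is at most $2M(1-p)^N=:\epsilon_N\to 0$. On the complementary event all $2M$ cones are occupied, so by the lemma the columns positively span $\mathbb R^M$ and $B$ has rank $M$; hence the positive vector $z\in\mathbb R_{>}^N$ exists. This proves the theorem with an exponentially small $\epsilon_N$.

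I expect the main obstacle to be the passage from a merely nonnegative solution to a strictly positive one: the coercivity characterization of Theorem \ref{T1} requires $z\in\mathbb R_{>}^N$, so it is essential that positive spanning produces the strictly positive null relation used to push the representation off the boundary of the cone. A secondary point worth recording is that this argument never uses the distributional hypotheses on $\bar r$ (the independence and the sign $r_0>0$ play no role), so the conclusion in fact holds for an arbitrary right-hand side; the only feature that matters is $N\to+\infty$ with $M$ fixed, which is exactly the ingredient absent from the square case of Theorem \ref{T3}. One may alternatively replace the cone-covering bound by Wendel's theorem, which gives the sharper value $\epsilon_N=2^{-N+1}\sum_{k=0}^{M-1}\binom{N-1}{k}$ for the probability that $N$ symmetric i.i.d.\ vectors lie in a halfspace through the origin.
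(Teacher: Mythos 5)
Your proof is correct, and its probabilistic engine is the same as the paper's: the paper also introduces caps $S_m^\pm(\epsilon)$ around the coordinate directions $\pm e^m$ and bounds, by a union bound, the probability that some cap contains no normalized column of $B$ — exactly your cone-occupancy estimate. Where you genuinely diverge is the deterministic step that follows. The paper's property (3) places the normalized vector $R$ \emph{directly} inside the cone generated by $M$ columns, one chosen from each appropriate cap; this forces it to also establish (property (1)) that $R/|R|$ avoids degenerate positions on the sphere, which is precisely where the distributional hypotheses on $\bar r_j$ enter, and it yields a representation supported on only $M$ columns, leaving implicit the passage to a vector $z\in\mathbb{R}_{>}^N$ with \emph{all} $N$ coordinates strictly positive, as (\ref{bound1}) and Theorem \ref{T1} demand. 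You instead prove the stronger statement that the columns positively span $\mathbb{R}^M$ (via the clean criterion $\tan\theta<1/\sqrt{M}$ for near-axis vectors), extract a strictly positive null relation $\sum_k c_k b^{(k)}=0$, and use it to push any nonnegative representation into the open cone. This buys three things: the strict-positivity issue is handled rigorously rather than glossed over; the hypotheses on $\bar r$ (independence, normality, $r_0>0$) become superfluous, so the conclusion holds for an arbitrary right-hand side; and your $\epsilon_N=2M(1-p)^N$ is exponentially small, whereas the paper's error $C\epsilon + M\bigl(1-|S_m(\epsilon)|/|S_M|\bigr)^{N-1}$ tends to zero only after letting $\epsilon=\epsilon(N)\to 0$ at a suitable rate. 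What the paper's route buys in exchange is an economical representation of $R$ by just $M$ linearly independent columns, which is closer in spirit to its ecological reading (persistence sustained by $M$ selected species). Your closing remark invoking Wendel's theorem is also correct and gives the sharp failure probability for the positive-spanning event.
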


\begin{proof} Let $B_j=\{b_{1j}, ..., b_{Mj} \}$ and $R=(r_1,..., r_M)$,  $j=1,..., N$. To prove Theorem it is sufficient to show that
the vector $R$ belongs to the convex cone, which coincides with all linear combinations with positive coefficients of $M$  vectors from the set $\{B_j\}_{j=1}^N$
and these vectors are linear independent with probability $\geq 1-\epsilon_n$, where $\epsilon_n\to 0$ as $N\to\infty$.

 We identify vectors $B_j$ and $R$ with points
$B_j/|B_j|$ and $R/|R|$ respectively on the sphere
$$
S_M=\{w: w=(w_1,..., w_M):  \ |w|=(w_1^2 + ... + w_M^2)^{1/2}=1 \}.
$$
Let us introduce the sets
$$
S_m^\pm(\epsilon)=\{ w \in S_M:  \ |w\pm e^m| < \epsilon \}, \;\;m=1,\ldots,M,
$$
where  $e^m$ the unit vectors with components $e_{k}^m=\delta_{k}^m$,  \ $k=1,\ldots, M$. Let also
$$
S(\epsilon)=\{ w \in S_M:  \ |w_k \pm 1| > 2\epsilon, \quad k=1,..., M \}.
$$

One can check the following properties:

(1)  Probability that the number $R/|R|$ lies  in $S(\epsilon)$ can be estimated from below by $1- C\epsilon$, where $C > 0$ is a
constant;

(2)   Probability that at least one of vectors $B_j/|B_j|$, $j=1,..., N$ belongs  to $S_m^\pm(\epsilon)$, $m=1,\ldots,M$, can be estimated from below
by
$$
1- M \Big(1- \frac{|S_m(\epsilon)|}{|S_M|}\Big)^{N-1},
$$
where $|S|$ is the measure of $S$;

(3)  If $S_m^\pm(\epsilon)$, $m=1,\ldots,M$, contains at least one vector $B_j$ and the vector $R$ belongs to $S(\epsilon)$ then it is inside the convex cone of certain $M$ vectors from different
$S_m^\pm(\epsilon)$.

 These
properties prove the theorem.
\end{proof}

Consider some biological corollaries and interpretations
of these results,  in particular,  Theorems \ref{T1}, \ref{T2}
and corollary \ref{Corol}.  Mathematically, persistence follows
from existence of positive solutions
of   systems  (\ref{LVX4}) and (\ref{LVX4N}). Let $M << N$, and all
interactions are random.  Theorem \ref{T2} asserts
that then the second system has a solution with a probability close to $1$.
However, the same arguments, as in the proof this theorem,  show then that
  system  (\ref{LVX4})  has a positive solution with probability close to $0$.

	To overcome this difficulty and understand origins of large system
	stability,  one can suppose that real ecological systems can use
		{\em an adaptive strategy}.  Indeed, let us remind that fundamental
		relation (\ref{K1a}) admits an interpretation by meeting frequencies
 between predators and preys (see Remark \ref{Rem1}). This
frequencies are defined by the coefficients $\sigma_k$ and $\rho_i$.
Using this fact, we rewrite
(\ref{LVX4}) as follows:
\begin{equation}
      \sum_{k=1}^M \sigma_k b_{ki} v_k = \rho_i r_i, \quad i=1,...,N.
    \label{LVX4A}
     \end{equation}
For fixed $\sigma_k$ and $\rho_i$ this system has a solution with a small
 probability for large $N$.  However, let us suppose that predator-species and prey-species can change the meeting frequency (i.e., adjust $\sigma_k$ and $\rho_i$) (this means, biologically, existence of adaptive behaviour).  Then these coefficients become unknowns
and now (\ref{LVX4A}) always has a solution if we assume that the signs of coefficients are preserved under their random choice.

Finally,
Theorem \ref{T2} shows that in the Hamiltonian case
 the dynamical stability can be reinforced by an increase of $N$
and  an adaptive  strategy.

\section{Star structures}
\label{pred}

In the case of the Hamiltonian stucture we can develop
a general approach to instability.  If a Hamiltonian system  a single positive equilibrium,
then all level sets $H(p,q)=E$ have the same topological structure.
 Catastophical phenomena appear if topology of these level sets changes
depending on  $E$.

We start with the simplest case,
when we are dealing with a star structure.

\subsection{Star structures without self-limitation} \label{star1}

As in the previous section,  we assume that $\gamma_{ij}=0,
 d_{jl}=0$ in system (\ref{EQ}), (\ref{EP}) and condition (\ref{GB}) holds. These assumptions guarantee,
in particular, that $C_i$ are constants.
Let $M=1$, i.e., we deal with a star structure. Then  condition (\ref{GB}) becomes
\begin{equation} \label{GB10}
r_i = a_{i1}\mu_1, \quad i=1,..., N
\end{equation}
for some positive $\mu_1$.
We set $a_i=a_{i1}$,
$b_j=b_{j1}$ and $q=q_1$, $p=p_1$. Let us denote $\bar r=\bar r_1$.
System (\ref{EQ}), (\ref{EP}) takes the form
\begin{equation}
     \frac{dq}{dt}=\exp(p) -\mu_1, \quad  \frac{dp}{dt}=f(C, q),
    \label{QH}
     \end{equation}
where
 \begin{equation}
  f(C, q)=-\sum_{j=1}^N b_j C_j \exp(a_j q) + \bar r.
    \label{fQ}
     \end{equation}
     This is a Hamiltonian system with the
   Hamiltonian
\begin{equation}
     H(q, p)=\Psi(p)+\Phi(q),
    \label{EQH}
     \end{equation}
where
\begin{equation}
     \Psi(p)=\exp(p) -\mu_1 p,
    \label{KQ}
     \end{equation}
   is a ''kinetic energy",
and
\begin{equation}
  \Phi(C, q)=\sum_{j=1}^N \rho_j C_j \exp(a_j q) -  \bar r q
    \label{phiQ}
     \end{equation}
     is a ''potential" energy. Here $\rho_j=b_j/a_j$ and $\sigma_j=1$.
 The  function $\Psi$ is convex, goes to $+\infty$ as
$|p| \to \infty$,  and has a  minimum, which is $\mu_1(1- \log \mu_1)$ and is attained at  $p=\log \mu_1$.

System  (\ref{QH}) has the energy integral
\begin{equation}
     \Psi(p)+\Phi(C, q)=E=const.
    \label{energy00}
     \end{equation}
Proceeding as in \cite{Whitham}, we can describe solution of (\ref{QH}) in terms of the
function $\Phi(C, q)$  and the energy level $E$. The
values of $q$ satisfying  (\ref{energy00}) lie in the set
$$
D(E)=\{q:   \quad \Phi(q) \le E -\min \Psi(p)=E - \mu_1(1- \log \mu_1)  \}.
$$
This set is a union of intervals, which can be bounded or unbounded.  The ends of these intervals are defined by
\begin{equation}
     \Phi(C, q)=E-\mu_1(1- \log \mu_1).
    \label{level}
\end{equation}
After finding $q$ the component $p$ can be reconstructed  from (\ref{energy00}).


\subsection{Oscillations, solitons and kinks} \label{Osk}

Let us consider some important typical situations.   In this section,  $a_i$ and $b_i$  are arbitrary.

({\bf i}) If  equation (\ref{level}) has a unique root or it has no roots, then the corresponding interval is infinite, and we have
non-periodic solutions $(q(t), p(t))$, which are unbounded in $q$. Then  some of $x_i(t)$ go to $0$ or $+\infty$ as $t \to \infty$ and the
original system is not ecologically stable;

({\bf ii})  If   (\ref{level}) has two non-degenerate roots $q_- < q_+$ and $\Phi(C, q) < E- \mu_1(1- \log \mu_1)$ for all $q \in
(q_{-}, q_{+})$ then  $(q(t, C, E), p(t, C, E))$  is a periodic solution of the amplitude $A=q_{+} -q_{-}$  (see Fig. \ref{Fig3}).  The
period $T$ is defined by
\begin{equation}
   T=  \int_0^T dt = \int_{q_-}^{q_+} \Big(\frac{dq}{dt}\Big)^{-1} dq=\int_{q_-}^{q_+} (\exp(p(q))-\mu_1)^{-1} dq,
    \label{period}
\end{equation}
where  $p(q)$ can be found from (\ref{energy00}). The period $T$ depends on $E$ and $C$. For example,
 we have only  periodic solutions for the  {\bf PP } case (see Fig. \ref{Fig1}).

({\bf iii}) If  $\Phi(C, q)$ has a  local maximum at $q=q_{+}$, which is $E - \mu_1(1- \log \mu_1)$, and the second root in (\ref{level})
is non-degenerate, (see Fig. \ref{Fig3}),
 we obtain a soliton.
   Its graph  has a local burst in time,
 and  $q(t) \to q_{+}$ as $t \to \infty$.

({\bf iv})
 The kink solution corresponds to the case when $\Phi$ has two local maxima
at $q_{\pm}$ such that
 $\Phi(q_{-})=\Phi(q_{+})=E- \mu_1(1- \log \mu_1)$.  The kink describes a  jump in $t$ that can correspond to
 a sharp change of ecological behaviour.

 The kink solutions are unstable under a small perturbation of  $\Phi(C, q)$, whereas solitons are stable
 under such perturbations.
 When the parameter $E$ changes, we
 observe  a transition (via solitons or kinks) between different periodic solutions and
transitions from a periodic solution to unbounded in time solution and vice versa. Solitons and kinks appear only if we have a  star
system with different signs $a_i b_i$ (for example, a combination of {\bf PP} and {\bf C}).

\begin{figure}[h!] \label{Fig1}
\vskip-0.5truecm
 \includegraphics[width=80mm]{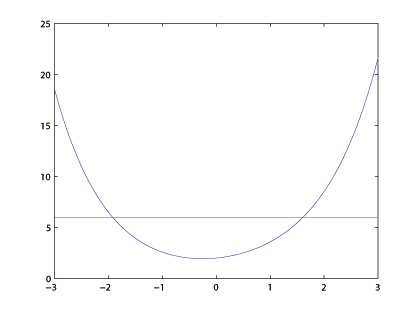}
\caption{\small The graph of  the potential $\Phi$ for case  {\bf PP}  and  small values of $\bar r$}
\end{figure}

\begin{figure}[h!]  \label{Fig2}
\vskip-0.5truecm
 \includegraphics[width=80mm]{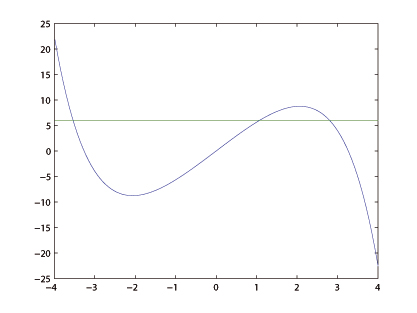}
\caption{\small The graph of  the potential $\Phi$ for the case when {\bf PP} and ${\bf C}$ interactions coexist, $N > 1$}
\end{figure}

\begin{figure}[h!]  \label{Fig3}
\vskip-0.5truecm
 \includegraphics[width=80mm]{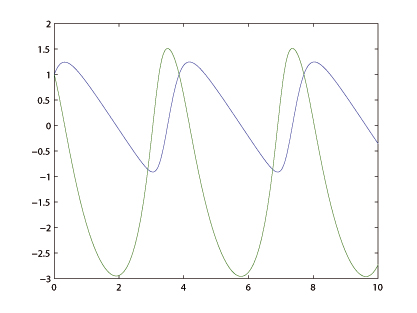}
\caption{\small The plot of  a periodic solution $q(t)$ and $p(t)$}
\end{figure}

Let us formulate  conditions providing  that
$\Phi(q) \to +\infty$ as $|q| \to +\infty$ in the case of arbitrary $a_i$ and $b_i$.
Then  $|q(t)|$ is bounded uniformly in $t$ and hence the population abundances $x_i$
are separated from $0$ and $+\infty$. Therefore, then system  (\ref{LVX}), (\ref{LVV}) is strongly persistent.

({\bf PI}) Assume that all $a_i$ are positive. Let $i_+$ be the index  corresponding to the largest $a_i$.
 Condition   $b_{i_{+}}>0$ and $\bar r >0$ is equivalent to persistency of our system.
  In this case $\Phi(q) \to +\infty$  as $q \to \infty$ and according to (\ref{level}), $|q(t)|$ is bounded;

 ({\bf PII}) Assume that all $a_i$ are negative. Let $i_{-}$ be the index that corresponds to the largest value of $-a_i$.
 If    $b_{i_{-}}< 0$ and $\bar r >0$,  then $|q(t)|$ is bounded and the system is persistent (and vice versa).

 ({\bf PIII}) Let $a_i$  may have different signs. Let $i_{\pm}$ be the indices  corresponding to the maximal values of $\pm a_i$,
 respectively.
  If $b_{i_{+}}>0$ and $b_{i_{-}} < 0$,  then the   system is persistent.

To conclude this section,  let us describe some effects.  First,  condition ({\bf PIII}) shows that there is possible
a {\em domino} effect, when  an extinction of a  species  leads to instability of all species community.

Indeed, let us assume that if  $a_j >0$ for  $j \ne i_{+}$ the coefficient $b_j < 0$.
Then extinction of the $i_{+}$-th species  leads to instability of  the whole species system.

The second effect is a noise-induced transition \cite{Horst}.  Assume that
the potential energy $\Phi(q)$
has a local maximum $\Phi_+$,  and $\Phi(q) \to +\infty $ as $q \to \pm \infty$.  Then $\Phi$ has at least  two local minima
(two potential wells) and, according to ({\bf iii}), a soliton exists. If  the network environment is random,
its fluctuations can generate  random transitions  between the potential wells even if
$E < \Phi_+$.  Such  transitions   provoke  ecological catastrophes.

\subsection{Hamiltonians via $x,v$ and perturbations} \label{hamxv}

	Hamiltonians in variables $(q,p)$ can be represented as functions of
	species abundances $x_i$ and $v$.
Let $m_i$ be positive numbers such that
$\sum_{i=1}^N m_i =1$.
By elementary transformations we obtain that functions
\begin{equation}
   E(x, v, m)=v -\mu \ln(v) + \sum_{i=1}^N \rho_i x_i -\bar r m_i a_i^{-1} \ln(x_i)
    \label{motionintegrals}
\end{equation}
are motion integrals, i.e., conserve on the system trajectories. If we consider
these functions as Hamiltonians, then, in order to write equations in a Hamiltonian
form, we must use a special representation involving a skew-product Poisson matrix depending on species-abundances \cite{Plank}.
We have thus a whole family of motion integrals. There is an interesting property:
if all $\rho_i$ and $a_i$ are positive, then
the minimum of the function  $E(x, v, m)$ gives us an equilibrium $(\bar v, \bar x)$
of the Hamiltonian star system defined by
$$
                   \bar v=\mu, \quad \bar x_i= \bar r m_i (\rho_i a_i)^{-1}.
$$
Different choices of weights $m_i$ correspond to different positive equilibria.
According to Corollary \ref{Corol},  the corresponding Lotka-Volterra system is strongly persistent (but it is not permanent).

Consider the structural  stability of Hamiltonian and non-Hamiltonian star systems
with small concurrence and self-limitation.
Let $M=1$ and $N > 1$. We assume, to simplify calculations, that
$\gamma_{ij}=\gamma_i \delta{ij}$, where $\delta{ij}$ is the Kroneckerdelta.
Let us consider equations  (\ref{LVX}) and (\ref{LVV}),  which can be written down then as follows:
\begin{equation}
     \frac{dx_i}{dt}=x_i (-r_i  +  a_{i} v -   \gamma_{i} x_i),
    \label{LVX11}
     \end{equation}
\begin{equation}
     \frac{dv}{dt}=v (\bar r   -  \sum_{l=1}^N b_{l} x_l  -  d v).
    \label{LVV11}
     \end{equation}
     where $i=1,..., N$.
		Let us denote $\rho_l=b_l/a_l$, $\mu_i= r_i/a_i$ and
		$\theta_i=\rho_i a_i \gamma_i^{-1}$.
		A  positive equilibrium is defined by relations
		\begin{equation}
     \bar x_i =\gamma_{i}^{-1} a_i ( v -\mu_i)=X_i(v),
    \label{Xeq11}
     \end{equation}
\begin{equation}
     \bar v=\frac{\bar r   +  \sum_{i=1}^N \mu_i \theta_i}{d + \sum_{i=1}^N  \theta_i},
    \label{Veq11}
     \end{equation}
provided that $X_i(v) > 0, \ i=1,...,N$.
One can check that for small $\gamma_i, d >0$
this condition holds only for the Hamiltonian case, i.e., when $\mu_i=\mu$ for all
$i$.  This means that the Hamiltonian structure supports diversity.
Consider the total biomass $B=\bar v + \sum_i \bar x_i$ as a function of $\gamma$
in the  case $\mu_i=\mu$ and $\gamma_i=\gamma$ for all $i$.
It is not difficult to check that, if
$\bar r > d\mu$, then $\bar v$ is an increasing function of $\gamma$ and $x_i$ are
decreasing functions of $\gamma$.  For small
$\gamma$ one has
$$
\bar v= \mu + O(\gamma), \quad \bar x_i= a_i (\bar r -  d\mu)(d\gamma+ \sum_{i=1}^N
 \theta_i)^{-1}.
$$
Therefore, for large $N$ the total biomass $B$ decreases in the self-limitation parameter
$\gamma$ and approaches to a maximal limit value as $\gamma \to 0$.

Another interesting fact is that if in the relation for the Hamiltonian
(\ref{motionintegrals}) we put $m_i=\bar x_i(\gamma)$ and $\mu=\bar v$,
 then $E(x, v)$ becomes a Lyapunov function decreasing along trajectories
of the corresponding Lotka-Volterra system.

\section{ Varying environment} \label{pred2}


Consider  system (\ref{LVX}),(\ref{LVV}) assuming that $M=1$ and
that $a_i, b_i,  r_i$ and $\bar r$ can  depend on $t$. The
dependence on $t$  describes  an influence of a varying environment.


System (\ref{LVX}),(\ref{LVV}) takes  the form
\begin{equation}
     \frac{dx_i}{dt}=x_i (-r_i(t)  +  a_{i}(t) v -  \sum_{j=1}^N \gamma_{ij} x_j),
    \label{LVXb}
     \end{equation}
\begin{equation}
     \frac{dv}{dt}=v (\bar r(t)   -  \sum_{j=1}^N b_{j}(t) x_j  -  d_{11} v).
    \label{LVVb}
     \end{equation}
Suppose that there exist constants $\bar \gamma_i >0$ and $\mu$ such that
\begin{equation}
      r_j(t) -   a_{j}(t) \mu= -\bar \gamma_j,  \quad j=1,..., N,
      \label{rr25}
     \end{equation}
 Similar to Section \ref{trans}, we put
$$
x_i=C_i \exp(a_i q), \quad  dq/dt + \mu=v.
$$
After some transformations (compare with Sect. \ref{trans}), we
obtain the following system for $C_i, q$ and $p$:
\begin{equation}
     \frac{dC_i}{dt}=C_i(\bar \gamma_i - \sum_{j=1}^N \gamma_{ij} C_j \exp(a_j q)   -  q \frac{da_{i}}{dt}),
    \label{VXC}
     \end{equation}
 \begin{equation}
     \frac{dq}{dt}=\exp(p)-\mu,
    \label{VVCq}
     \end{equation}
\begin{equation}
     \frac{dp}{dt}= \bar r(t)  -  d_{11} \exp(p) -  \sum_{j=1}^N b_{j} C_j \exp(a_i q),
    \label{VVC}
     \end{equation}
which is equivalent to (\ref{LVXb}) and (\ref{LVVb}). We investigate this system in the next subsection.

\subsection{Weak self-limitation and slowly varying environment}
\label{SWE}
 Let
 the coefficients $a_k, b_k, r_k$ and $\bar r$ be functions of  the slow time $\tau= \epsilon t$, where $\epsilon>0$ is a
 small parameter. We assume that

 ({\bf I}) self-limitations    are small, i.e.,
${d}_{11}=\epsilon \bar d$, where $\bar d > 0$ and ${\gamma}_{ij}=\kappa  \gamma_{i} \delta_{ij}$, where $\delta_{ij}$ stands for the Kroneckerdelta, $ \gamma_i >0$ and $\kappa >0$;

({\bf II})  $\bar \gamma_i =\kappa \hat \gamma_i$, where  $\hat \gamma_i >0$.

Under these assumptions,  system  (\ref{VXC}), (\ref{VVC}) lies in
the class of well studied weakly perturbed Hamiltonian systems \cite{Arnold, Kam}.
  Equations (\ref{VVCq}) and (\ref{VVC})
 take the following form:
\begin{equation}
     \frac{dq}{dt}=\exp(p) -\mu, \quad  \frac{dp}{dt}=f(C(t), q(t), \tau) + \epsilon g( p(t)),
    \label{QHPer}
     \end{equation}
     where  $$
     g=-\bar d \exp(p)
     $$
is a term associated with self-limitation effects for $v$, and
     \begin{equation}
  f(C, q, \tau)=-\sum_{j=1}^N b_j(\tau) C_j \exp(a_j(\tau) q) + \bar r(\tau).
    \label{fQt}
     \end{equation}
   Equation  (\ref{VXC}) for $C_i(t)$ becomes
   \begin{equation}
     \frac{dC_i}{dt}=  \epsilon \tilde W_i(C, q, \tau),
    \label{Cper}
     \end{equation}
   where
   \begin{equation} \label{tWi}
     \tilde W_i(C, q,\tau)=\beta C_i \Big( \hat \gamma_i(\tau)  -  \gamma_{i} C_i \exp(a_i(\tau) q)  -
     q \frac{da_i}{d\tau} \Big)
\end{equation}
   and $\beta=\kappa/\epsilon$.

         System (\ref{QHPer}), (\ref{Cper})   can be resolved by the  averaging method (see \cite{Sari}), which
         gives  rigorous results for time intervals of  order $O(\epsilon^{-1})$.
         According to this method, we can represent $C(t)$ as
         $$
         C(t)=\bar C(\tau) + O(\epsilon),
         $$
         where an equation for $\bar C(\tau)$ will be written below.
         The $q$ and $p$ variables  can be represented in the following multiscale form:
    \begin{equation}
     q=Q(t, \tau, E(\tau)) + O(\epsilon), \quad p=P(t, \tau, E(\tau)) + O(\epsilon),
    \label{persol}
     \end{equation}
        where  the leading terms  $Q$ and $P$  can be found
        from  the system:
        \begin{equation}
     \frac{dQ}{dt}=\exp(P) -\mu, \quad  \frac{dP}{dt}=f(\bar C(\tau), Q,
     \tau),
    \label{QHPer2}
     \end{equation}
		Compare with (\ref{QH}). Here $f$ is given by (\ref{fQ}), where
		$C$, $b_j$ and $\bar r$ depend on the parameter $\tau$.
     This system is Hamiltonian and
          the corresponding energy integral is defined by
        (\ref{energy00}). Taking into account the dependence of $f$ on $\tau$
				in  (\ref{QHPer2}) we write the energy integral as
       \begin{equation}
     \Psi(P)+\Phi(\bar C(\tau), Q, \tau)=E(\tau)
    \label{energyp}
     \end{equation}
for each fixed $\tau$ and $E$. The properties of solutions $Q(t,
\tau, E(\tau))$ can be described as  in  Sect.\ref{pred}. We seek
periodic in $t$ solutions $Q,P$ of system
(\ref{QHPer2}), (\ref{energyp}) assuming that $\tau$ and hence $\bar
C, E$ are parameters.  System (\ref{QHPer2}) should be supplemented
by  the equation describing  behaviour of $E(\tau)$ and $\bar
C(\tau)$ as  functions of $\tau$ (it appears that these equations
are coupled).
 In the multiscale procedure, the equation for $E$  guarantees the boundedness of corrections to the
leading terms $Q$ and $ P$ on the time intervals of length
$O(\epsilon^{-1})$.

The equation for the unknown function $E(\tau)$ can be derived  as follows. Let
$$
 \langle  z(\cdot,\tau) \rangle = T^{-1}\int_0^{T} z(t, \tau) dt
$$
be  the average of a function $z$ over the period $T(E)$.
Using Theorem  3 from \cite{Sari},  one has
\begin{equation}
     \frac{dE}{d\tau}=S_1(E, \bar C, \tau) + S_2(E, \bar C, \tau) + S_3(E, \bar C, \tau),
    \label{energyevol}
     \end{equation}
 where
 $$
   S_1=\langle g(Q(\cdot,\tau, E), P(\cdot, \tau),\tau) (\exp(P(\cdot, \tau, E)) -\mu) \rangle
   $$
   gives the contribution of self-limitation effects,
   $$
   S_2=  \langle \Phi_{\tau} (\bar C(\tau), Q(\cdot, \tau, E), \tau)\rangle
   $$
   is the term determining a direct dependence of $\Phi$ on $\tau$ and
   $$
     S_3=\langle \sum_{i=1}^N \Phi_{\bar C_i}(\bar C(\tau), Q(\cdot, \tau, E), \tau)  W_i(\bar C(\tau), Q(\cdot, \tau, E), \tau)) \rangle
   $$
    is the term coming from  the dependence of $\Phi$ on $C_i$.
Here
\begin{equation} \label{ViDD}
     W_i(\bar C, E, \tau)=\bar C_i (\bar \gamma_i  -   \gamma_{i} \bar C_i \theta_i(\bar C, E)   -  \frac{da_i}{d\tau}
      \langle Q \rangle ),
\end{equation}
where
\begin{equation} \label{theta}
     \theta_i(C, E)= \langle  \exp(a_i Q) \rangle.
\end{equation}
For $\bar C_i$ we have \cite{Sari}
\begin{equation} \label{Cev}
\frac{d\bar C_i}{d\tau}=W_i(\bar C_i, E, \tau).
\end{equation}
Thus, we have obtained  equations (\ref{Cev}) and (\ref{energyevol})
for $\bar C_i(\tau)$  and $E$, respectively.  These equations and
 (\ref{QHPer2}), (\ref{energyp}) give us the complete system that
allows to describe dynamics for time intervals of length
$O(\epsilon^{-1})$.  Note that first two equations
(\ref{QHPer2}), (\ref{energyp}) contain the derivatives with respect
to  $t$, while eqs. (\ref{Cev}) and (\ref{energyevol}) involve the
slow time $\tau$ only.

{\bf Remark }. Hamiltonian $H$ depends on the parameter $\mu$. The averaging allows us to find this parameter.
Note that for periodic solutions relation (\ref{QHPer2}) implies
\begin{equation} \label{muI}
\langle  f(\bar C(\tau), Q, \tau) \rangle= \sum_{k=1}^N b_k \bar C_k \langle \exp(a_k Q) \rangle - \bar r=0.
\end{equation}
It is natural  to choose  $\mu$ in such a way that (\ref{Cev}) has a stationary solution.  This solution is defined
by
\begin{equation} \label{muII}
 \bar C_k = \bar \gamma_k ( \gamma_k \langle \exp(a_k Q) \rangle)^{-1}.
\end{equation}
Substituting (\ref{muII}) into (\ref{muI})  and using the definition of $\bar \gamma_k$,
one has the following relation
\begin{equation} \label{muIII}
\sum_{k=1}^N  b_k (r_k - a_k \mu)\gamma_k^{-1}=0.
\end{equation}
One can verify  that $\mu=\bar v >0$, where  $\bar v$ is the $v$-component of an equilibrium solution $(\bar x, \bar v)$ of  system (\ref{LVX}), (\ref{LVV})
in the case  $M=1$, ${\bf D}={\bf 0}$ and $\bf \Gamma$ is a diagonal matrix.

One can show that solutions of this ''averaged" system
(\ref{QHPer2}), (\ref{energyp}), (\ref{energyevol}), (\ref{Cev}) are
close to the solutions of the original system (\ref{QHPer}),
(\ref{Cper}) on a time interval of  length  $O(\epsilon^{-1})$.
Equation (\ref{energyevol})  expresses an ''averaged" energetic
balance:
  three factors define evolution of averaged energy, namely,  the evolution of $\bar C$,  the dependence of the parameters
  on $\tau$ and  self-limitation effects.

\subsection{Main effects: irregular bursts, quasiperiodic solutions and chaos} \label{effects}

To proceed with a qualitative  analysis of solutions to system
(\ref{QHPer}), (\ref{Cper}), we assume that
\begin{equation} \label{Phias}
\Phi(C, q) \to +\infty   \quad \mbox{as} \ q \to \pm \infty.
\end{equation}
Validity of this condition is analyzed in Sect.\ref{globs}, and (\ref{Phias})
holds for random $a_j$ and $b_j$ if $a_j/b_j=\rho_j >0$. Let
$q_j^*(C)$ be local extrema of $\Phi$ for a given $C$. Let us put $\Phi_j=\Phi(q_j^*(C))$.

Energetic relation (\ref{energyevol}) allows us to show existence of interesting phenomena. In fact,  if $\Phi_j$ is a local maximum, then
evolution of $E$ can lead to special periodic solutions in $q$, when $E$ approaches  the value $\Phi_j$. These
solutions can be represented as periodic sequences of bursts separated by  large time intervals $T_0(E, \tau)$ (see Fig. \ref{Fig4}). If $E=\Phi_j$,
we have a single burst (soliton) and $T_0=\infty$.  Note that Fig. \ref{Fig4} illustrates the case
of fixed $\tau$ and $E$ ($\epsilon=0$). For $E$ close to $\Phi_j$  and small $\epsilon >0$ the time behaviour of $q(t, \tau)$  exhibits a
chain of slightly different bursts separated by different large time intervals (see Fig. \ref{Fig5}). Existence of solitons means that
there is a homoclinic structure in the unperturbed Hamiltonian dynamics, and therefore, this sequence of bursts can be chaotic as a
result of $\tau$-evolution \cite{Arnold, Robinson}.

\begin{figure}[h!] \label{Fig4}
\vskip-0.5truecm
 \includegraphics[width=80mm]{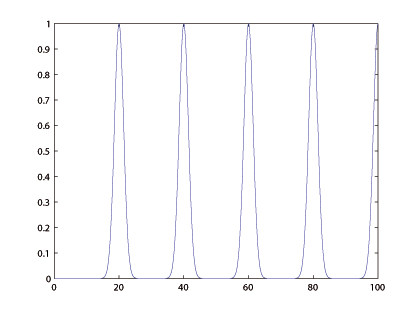}
\caption{\small The plot of  a periodic solution $q(t)$ with a large period $T$ for $\epsilon=0$}
\end{figure}

\begin{figure}[h!]   \label{Fig5}
\vskip-0.5truecm
 \includegraphics[width=80mm]{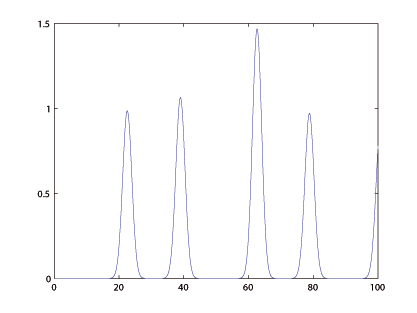}
\caption{\small The plot of  a  solution $q(t)$  close to a sequence of the bursts for small $\epsilon$}
\end{figure}

The following picture  of the time evolution of solutions $q(t)$ can be observed.  For  $E$ close to a local minimum of $\Phi$ we have
periodic oscillations with an amplitude and a period, which slowly evolve in $t$.
  When $E$ approaches  a local maximum of $\Phi$,
 we obtain a irregular chain of rare bursts.
Such a picture is observed in macroscopic ecological dynamics  (see  \cite{Uyeda}).

The following effects can occur here:

({\bf Ai})   Let $E(0)  > \Phi_l^*$ for some $l$. The value $E(\tau)$ does not meet  values $\Phi_l^*$ for all $\tau$.
Then we deal with only periodical solutions with a period and an amplitude depending on $\tau$. This
means that the environment stabilizes the population against self-limitation;

({\bf Aii}) The value $E(\tau) $  passes through  $\Phi_l^*$ for some $\tau$, then we have an ecological burst;

({\bf Aiii})  An ecological burst is also possible when $E(0) < \Phi_l^*$ for all $l$ and $S_2$ is more than $|S_1|$.
Then we observe that the environment destabilizes the population against self-limitation.

So,  the climate oscillations can stabilize ecological dynamics in certain cases.

Consider more complicated situations.  A system of equations similar to (\ref{Cev}) and (\ref{energyevol}) can be derived (at least,
formally) in the general multidimensional case $M >1$ if the parameters $\bar \gamma_i$ (defined by (\ref{GB})) are small, and the
potential energy $\Phi(q)$ satisfies conditions (\ref{stab25}). Indeed, the behaviour of solutions of non-perturbed
Hamiltonian system is defined by the energy $E$. If $E$ is close to a local minimum of $\Phi$, then, at least for some values of $E$, we
have quasiperiodic solutions that follows from the KAM theory (see \cite{Arnold, Kam}). Then the averaging procedure leads to a system
analogous to  (\ref{Cev}) and (\ref{energyevol}). In the  multidimensional case effects ({\bf Ai})- ({\bf Aiii}) are also possible if the
potential energy $\Phi$ has local minima and saddle points. All these effects are induced by non-Hamiltonian perturbations.

Other interesting situations appear for an ecological system
decomposed into $n>1$ weakly interacting compartments, which are
star systems with $M=1$ and $N > 1$. Let us assume that
self-limitation is absent: $\bar \gamma_i=0$, $\gamma_i=0$ and ${\bf
D}=0$. In this case the Hamiltonian $H(C, q, p)$ can be represented as
\begin{equation} \label{KAM1}
H=\sum_{l=1}^n \Psi(C,  p_l) + \Phi(C, q_l) + \kappa \tilde \Phi(C, q),
\end{equation}
It consists of two independent terms and  a small
contribution $\kappa \tilde \Phi$ describing a weak interactions
between compartments. For example, such  situation can occur if we
have $n$  preys and $Nn$ of predators. Each predators usually eats
some special types of prey, but sometimes (with a small frequency
$\kappa >0$) different predators share the same prey.

The Hamiltonian systems with Hamiltonians (\ref{KAM1}) are studied in the KAM theory \cite{Arnold, Kam}.
For small  $\kappa$ the most of trajectories are  quasiperiodic with
slowly evolving time periods and amplitudes. A small part of trajectories can exhibit a chaotic behaviour.

\section{Resonances}\label{Reson}

\subsection{Two interacting star systems}

   The resonance  analysis is important in the Hamiltonian dynamics investigation,
 since  resonances can lead to instabilities, periodical oscillations, chaos,  and other
 interesting effects in systems with many variables. These effects are important for mechanical and physical applications.  However,  resonances have not considered yet
 for large ecological webs.  For example, work \cite{King} considered
the case of two species  predator-prey systems perturbed  by small time periodic climate
variations. In opposite to  \cite{King}, we consider internal resonances, when
there are no external variations and the resonance effect is generated by system interactions.  We show that in ecological networks such internal
resonance effects exist and can provoke  instabilities.

 Let us consider two star subsystems, the first one involves variables $v$ and $x_i$,  $i=1,..., N_1$,
 while  the second subsystem involves abundances $w$ and $y_i$,  $i=1,...,N_2$.  System of equations describing a weak interaction between
 these subsystems can be written in the following form:
 \begin{equation}  \label{xweak1}
 \frac{dx_i}{dt}=x_i(- r_i^{(1)} +  a_i^{(1)} v + \kappa \tilde a_i^{(1)} w - \gamma_i^{(1)} x_i),
 \end{equation}
  \begin{equation}  \label{vweak1}
\frac{dv}{dt}=v(\bar r_i^{(1)} -  \sum_{i=1}^{N_1} b_i^{(1)} x_i  - \kappa  \sum_{j=1}^{N_2} \tilde b_j^{(1)} y_j - \epsilon d_1 v),
  \end{equation}
 \begin{equation}  \label{xweak2}
 \frac{dy_j}{dt}=y_j(- r_j^{(2)} +  a_i^{(2)} v + \kappa \tilde a_i^{(2)} w - \gamma_j^{(2)} y_j),
 \end{equation}
  \begin{equation}  \label{vweak2}
 \frac{dw}{dt}=w(\bar r_i^{(2)} -  \sum_{j=1}^{N_2} b_j^{(2)} y_j - \epsilon  \sum_{i=1}^{N_1} \tilde b_i^{(2)} x_i - \epsilon d_2 w).
 \end{equation}
 Here $\kappa >0$ and $\epsilon>0$ are small parameters such that $\kappa >> \epsilon$.  The terms proportional to $\kappa$ describe
 a weak interaction of two subpopulations with star structures.  The terms proportional to $\epsilon$  correspond to
 self-limitation effects.
We  assume that
$$
r_i^{(k)}-\mu_k a_i^{(k)}=-\epsilon \tilde \gamma^({k}), \quad k=1,2,
$$
and introduce variables $q_1$ and $ q_2$   (compare with
Sect. \ref{trans}) by
$$
\frac{dq_1}{dt}+ \mu_1=v, \quad \frac{dq_2}{dt}+ \mu_2=w,
$$
and
\begin{equation} \label{Ham1}
x_i = C_i^{(1)} \exp(a_i^{(1)} q_1), \quad  y_j = C_j^{(2)} \exp(a_i^{(2)} q_2),
\end{equation}
where $C_i^{k}$  are unknowns. Let us define $p_i$ by
\begin{equation} \label{Ham1}
\frac{dq_i}{dt}=\exp(p_i)- \mu_i,   \quad i=1,2.
\end{equation}
Then  $p_1$ and $p_2$ satisfy
\begin{equation} \label{Hamp1}
\frac{dp_1}{dt}= -\Phi_q^{(1)}(q_1) + \kappa g_1(q_2) - \epsilon d_1 \exp(p_1),
\end{equation}
\begin{equation} \label{Hamp2}
\frac{dp_2}{dt}= -\Phi_q^{(2)}(q_2) + \kappa g_2(q_1) - \epsilon d_2 \exp(p_2),
\end{equation}
where
$$
\Phi^{(1)}(q_1)= \sum_{i=1}^{N_1} \frac{C_i^{(1)} b_i^{(1)}}{ a_i^{(1)}}  \exp(a_i^{(1)} q_1) - \bar r^{(1)}q_1,
$$
$$
\Phi^{(2)}(q_2)= \sum_{i=1}^{N_2} \frac{C_i^{(2)} b_i^{(2)}}{ a_i^{(2)}}  \exp(a_i^{(1)} q_2) - \bar r^{(2)} q_2,
$$
$$
g_1(q_2)=\sum_{j=1}^{N_2} \tilde b_j^{(1)}  C_j^{(2)} \exp(a_j^{(2)} q_2),
\quad g_2(q_1)=\sum_{i=1}^{N_1} \tilde b_i^{(2)}  C_i^{(1)} \exp(a_i^{(1)} q_1).
$$

For  fixed $C_i^{k}$  we  obtain the weakly perturbed Hamiltonian
system, defined by equations (\ref{Ham1}),(\ref{Hamp1}) and
(\ref{Hamp2}). We suppose that for $\kappa=\epsilon=0$ this system
has  an equilibrium solution
\begin{equation} \label{eqstate}
q_1(t) \equiv \bar q_1, \quad q_2(t) \equiv \bar q_2
\end{equation}
and periodical solutions oscillating around this equilibrium.

\subsection{Asymptotic analysis}

To simplify the statement, we consider the case of small periodic
oscillations near equilibrium (\ref{eqstate}). Then we keep only
 quadratic terms in the Taylor expansion of $\Phi^k$, i.e.
$$
\Phi^{1}(q_1)=\omega_1^2 \tilde q_1^2,  \quad \Phi^{2}(q_2)=\omega_2^2 \tilde q_2^2,  \quad  \tilde q_i=q_i - \bar q_i.
$$
The functions $g_i$ can be approximated by linear terms as follows:
$$
g_1(q_2)=g_1  +  g_{12} \tilde q_2 + O(\tilde q_2^2),  \quad   g_2(q_1)=g_2  +  g_{21} \tilde q_1 + O(\tilde q_1^2),
$$
where
$$
g_{12}= \frac{dg_1(q)}{dq}(\bar q_1), \quad g_{21}=\frac{dg_2(q)}{dq}(\bar q_2).
$$
In the case of  small oscillations system (\ref{Ham1}),(\ref{Hamp1})
and  (\ref{Hamp2}) can be written as a linear system of  second
order
\begin{equation} \label{Hamp1S}
\frac{d^2\tilde q_1}{dt^2}  + \omega_1^2 \tilde q_1= \kappa g_{12} \tilde q_2 - \epsilon d_1 (\frac{\tilde dq_1}{dt} +\mu_1),
\end{equation}
\begin{equation} \label{Hamp2S}
\frac{d^2 \tilde q_2}{dt^2} + \omega_2^2 \tilde q_2= \kappa g_{21} \tilde q_1 - \epsilon d_2 (\frac{\tilde dq_2}{dt} +\mu_2).
\end{equation}
The resonance case occurs if
$$
\omega_1=\omega_2=\omega.
$$
If $|\omega_1 - \omega_2| >> \kappa$, then  system
(\ref{Hamp1S}),(\ref{Hamp2S})  can be resolved in a simple way and
the solutions are
small regular perturbations of periodic limit cycles. Let us
consider the resonance case.

To resolve  system  (\ref{Hamp1S}), (\ref{Hamp2S}), we apply
a standard asymptotic method. Let us introduce a slow time
$\tau=\kappa  t$. We are looking for asymptotic solutions in the form
\begin{equation} \label{Hamp2Sq1}
\tilde q_k=  Q_k(\tau) \sin(\omega t  + \phi_k(\tau)) + \kappa S_k(t, \tau) + ...  , \quad k=1, 2
\end{equation}
where $Q_k$ and $\phi_k$ are new unknown functions of $\tau$, $S_k(t, \tau)$  are corrections of the main terms.  Here
$Q_k$ define  slowly evolving amplitudes of the oscillations whereas $\phi_k$ describe phase shifts. Differentiating  (\ref{Hamp2Sq1}) with respect to $t$
and
substituting the relations obtained into (\ref{Hamp1S}) and (\ref{Hamp2S}), we have
\begin{equation} \label{Hamp3S}
\frac{\partial ^2 S_k}{\partial t^2}  + \omega^2 S_k= F_k(Q_1, Q_2, \phi_1, \phi_2, t),
\end{equation}
where
\begin{eqnarray*} 
 F_1=-(2\omega\frac{dQ_1}{d\tau}+\bar\epsilon d_1 \omega Q_1)  \cos(\omega t  + \phi_1) +
2\omega Q_1 \frac{d\phi_1}{d\tau} \sin(\omega t  + \phi_1) + \\  +  g_{12} Q_2 \sin(\omega t +\phi_2)  + \tilde \mu_1) + O(\kappa^2),
\end{eqnarray*}
\begin{eqnarray*} 
 F_2=-(2\omega\frac{dQ_2}{d\tau} +\bar\epsilon d_2 \omega Q_2 ) \cos(\omega t  + \phi_2) +
 +2\omega Q_k \frac{d\phi_2}{d\tau} \sin(\omega t  + \phi_2)  + \\ +
g_{21} Q_1 \sin(\omega t +\phi_1)  +\tilde \mu_2) + O(\kappa^2).
\end{eqnarray*}
Here $\bar \epsilon=\epsilon/\kappa$ and $\tilde \mu_k=\bar \epsilon
\mu_k d_k$.

We seek solutions $S_k$ of  (\ref{Hamp3S}), which are $O(1)$
as $\kappa \to 0$. Such solutions exist if and only if the
following relations hold:
\begin{equation} \label{Hamp6S}
\int_0^T F_k(Q_1, Q_2, \phi_1, \phi_2, t) \sin(\omega t+\phi_k)dt=0,
\end{equation}
\begin{equation} \label{Hamp7S}
\int_0^T F_k(Q_1, Q_2, \phi_1, \phi_2, t) \cos(\omega t+\phi_k)dt=0,
\end{equation}
where $T=2\pi/\omega$.
Evaluation of the integrals in (\ref{Hamp6S}), (\ref{Hamp7S}) gives
the following system
  for the amplitudes $Q_k$ and the phases $\phi_k$:
\begin{equation} \label{HampQ1}
 \omega\frac{dQ_1}{d\tau}=-\bar\epsilon d_1 \omega Q_1 + b_{12} Q_2  \sin(\phi_2-\phi_1),
\end{equation}
\begin{equation} \label{HampQ2}
 \omega\frac{dQ_2}{d\tau}=-\bar\epsilon d_2 \omega Q_2 + b_{21} Q_1  \sin(\phi_2-\phi_1),
\end{equation}
\begin{equation} \label{Hampf1}
 \omega Q_1\frac{d\phi_1}{d\tau}= -b_{12} Q_2  \cos(\phi_2-\phi_1),
\end{equation}
\begin{equation} \label{Hampf2}
 \omega Q_2\frac{d\phi_2}{d\tau}= b_{21} Q_1  \cos(\phi_2-\phi_1),
\end{equation}
where $ b_{12}= g_{12}/2,  \quad  b_{21}= -g_{21}/2. $ We refer to
these equations as {\em resonance} system.

\subsection{Investigation of the resonance system}

The resonance system  can be studied
analytically in some cases. Let $\phi_2(0)-\phi_1(0)=(2n+1)\pi/2$,
where $n$ is an integer.  Then equations (\ref{Hampf1}),
(\ref{Hampf2}) show that $\phi_2(\tau)-\phi_1(\tau)=(2n+1)\pi/2$ for
all $\tau \ge 0$ and thus $\sin(\phi_2(\tau)- \phi_1(\tau))=\pm 1$.
As a result, we reduce (\ref{HampQ1}), (\ref{HampQ2})  to the linear
system
\begin{equation} \label{HampQ1L}
 2\omega\frac{dQ_1}{d\tau}=-\bar\epsilon d_1 \omega Q_1 \pm b_{12} Q_2,
\end{equation}
\begin{equation} \label{HampQ2L}
 2\omega\frac{dQ_2}{d\tau}=-\bar\epsilon d_2 \omega Q_2 \pm b_{21} Q_1.
\end{equation}
If $\bar \epsilon >> 1$, i.e., the self-limitation is stronger than
the interaction, then solutions of this system are exponentially
decreasing and we have  stability. If $\bar \epsilon << 1$, then
solutions of this system are exponentially increasing and we have
instability under condition $ b_{21} b_{12} >0$, i.e.
\begin{equation} \label{Inscond}
  R=\frac{dg_1(q)}{dq}(\bar q_1)\frac{dg_2(q)}{dq}(\bar q_2) <0.
\end{equation}
We see that if  $a_i^{(k)}, \tilde b_i^{(k)} >0$ for all $i$, then $R
>0$ and we have a stable dynamics ($Q_i$ are exponentially
decreasing).

This relation leads to the following biological conclusion.
Instability occurs as a result of resonances only  if prey-predator
interactions are mixed with  other ones, which perturb prey-predator
system (even if these perturbations are small).

\section{Populations of random structure}
\label{random}

The Hamiltonian approach proposed above allows us  to show why complexity can lead to stability and gives a simple method to estimate the
number of unstable and stable web equilibria.

Let us start with one-dimensional case $q \in {\mathbb R}$. Consider the potential energy without
non-essential linear terms:
 \begin{equation} \label{rH}
\Phi(q)=\sum_{k=1}^N b_k \exp(a_k q), \quad b_k=\rho_k C_k.
\end{equation}
If $\Phi(q)$ has a unique minimum and $\Phi(q) \to +\infty$ as $Q \to \pm \infty$, we obtain stability as it was discussed above. Assume
that $b_k$ are random mutually independent coefficients such that the averages $E b_k =\bar b >0$ and the deviation is $\sigma_b$.
Furthermore, we assume that $a_k$ are random independent coefficients such that $Ea_k=0$ and $\sigma(a_k)=\sigma_a$. Then for large $N$ we obtain
the following typical plot like a parabolic curve illustrated by Fig. 4.

\begin{figure}[h!]  \label{Pot}
\vskip-0.5truecm
 \includegraphics[width=80mm]{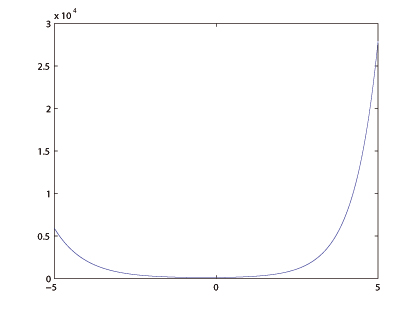}
\caption{\small The plot of  the potential $\Phi(Q)$ for a random Hamiltonian with $N=100$, $\bar b=1$, $\sigma_a=2$, $\sigma_b=10$}
\end{figure}


Numerical results show that stability  increases when the network size $N$ increases. For  $\bar b=1$, $\sigma_b=10$ and $\sigma_a=5 $  numerical
simulations  show that among random Hamiltonian networks with $N \in (1, 100)$  there are  $20$ networks exhibiting instability (the
plot is not parabolic), and  for $N \in (500, 1000)$ only  $4$   networks demonstrate instability.

The next plots show the probability to have a star-structure with time periodic solutions  and with periodic solutions and solitons.  We have studied
a dependence on the number of species $N$.  The probabilities are computed by $150$ test examples with random parameters.

\begin{figure}[h!] \label{sol}
\vskip-0.5truecm
 \includegraphics[width=80mm]{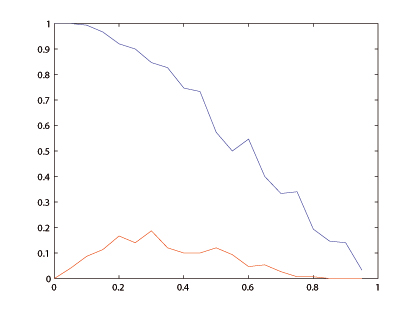}
\caption{\small The plot of  the probability to have a periodic solution (the green curve) and soliton (the red curve).  The horizontal axis:
the probability that an interaction is not predator-prey (mutualism or competition). Here $N=10$, $\bar |a|=\bar |b|=1$, $\sigma
b=\sigma_a=0.5$ and $\bar r=5$.}
\end{figure}

\begin{figure}[h!]   \label{solmutu}
\vskip-0.5truecm
 \includegraphics[width=80mm]{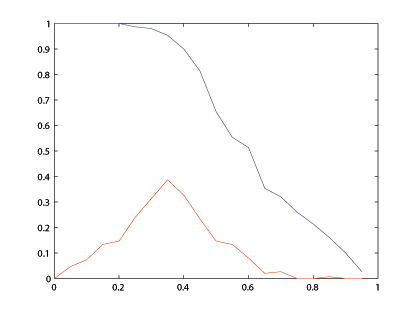}
\caption{\small The plot of  the probability to have a periodic solution (the green curve) and soliton (the red curve).  The horizontal axis:
the probability that an interaction is not predator-prey (mutualism or competition). Here $N=40$, $\bar |a|=\bar |b|=1$, $\sigma
b=\sigma_a=0.5$ and $\bar r=5$.}
\end{figure}


\section{Conclusion}

In this paper, we  apply  Hamiltonian and perturbation methods  to
large complex ecological webs in order to describe
global stability,  and complicated dynamic phenomena.
These methods exploit standard, very effective in mechanics
and physics, canonical variables. Such approach, which is new for ecology,
allows to obtain a new short description of large systems.

For  Hamiltonian Lotka -Volterra systems (HLV)
 sufficient and necessary conditions of persistence (global ecological stability)
are found. It is shown that random large HLV systems are more stable:
stability of a large predator prey system increases in system size, if the
predator species use an adaptive strategy.

Results describing
transient dynamics and complex a dynamical phenomena are presented.
The Hamiltonian
 gives a short system description
that allows us to describe how varying environment (for example, climate), a weak self-limitation and concurrence affect ecological systems.
 Note that macroscopic ecological dynamics was analyzed by a great base of experimental data in
 \cite{Uyeda}.
 The best-fitting
model  explaining these experimentally observed time patterns is a model that combines rare but substantial bursts  with bounded fluctuations on shorter timescales.
Such behaviour can be explained by our model. Namely,
 a weak self-limitation may lead not only to  a simple oscillation damping but also  to  more interesting effects. By averaging methods, we obtain that a typical
 dynamics
 of a system,  where different interactions coexist, is as follows.  There occur
 periodic oscillations. The period and the form of these oscillations slowly evolve in time. After a long time evolution,
 these periodic oscillations can be transformed to a chaotic chain of rare bursts. Mathematically, these bursts are connected with some special solutions like
 solitons and kinks  in physics.
 A slowly varying environment   can also provoke a chaos,   but in some cases it can repress  bursts and  stabilizes dynamics. So, the climate may increase ecological system stability.

The methods, used in this paper, take into account   the topological structure of large networks, which were  studied  during last decades (for example,
\cite{AB, Bas1, Bas2} and references therein). It allows us to investigate the dynamical behavior of complicated ecological networks in more detail.
In fact, our approach is capable to study the network dynamics globally, not only in a local neighborhood of equilibria that allowed to develop
 results of \cite{May1, May2, Alles, Alles1, Alles2}, to describe possible
transient dynamics and finally
reveals some mechanisms of large ecosystem destruction and evolution.
 It is shown that solitons, kinks, and resonances  are possible for
large ecosystems. Such effects
 can be connected with ecological catastrophes. In particular, it is
quite possible that large ecological system can collapse without any explicit
external causes as a result of an internal resonance.
It is shown that these effects arise as a result of existence of mixed
interactions (say, predator-prey plus a weak competition) and niche overlapping.

Finally, we conclude that a combination of  the Hamiltonian interaction structure  with
weak concurrence and self-limitation leads to a complete description of
dynamical phenomena  for large systems.  We describe
globally stable food webs having a large biomass, biodiversity and exhibiting complicated dynamical effects.
Moreover,  let us note that,  first in literature,
we successfully use rigorous mathematical methods to investigate
such large complex ecological networks.

{\bf Acknowledgements}

This second author  was financially supported by
Linkoping University,
Government of Russian Federation, Grant 074-U01 and by grant 13-01-00405-а of Russian Fund of Basic Research.
Also he was also supported in part by grant
RO1 OD010936 (formerly RR07801) from the US NIH.


\end{document}